\newtheorem{lemma}{Lemma}[section]
\newtheorem{proposition}[lemma]{Proposition}
\newtheorem{corollary}[lemma]{Corollary}
\newtheorem{remark}[lemma]{Remark}
\begin{document}

\bibliographystyle{plain}

\title{Projection based model order reduction methods for the estimation of vector-valued variables of interest\thanks{This work was supported by the French National Research Agency (Grant ANR CHORUS MONU-0005)}
}

\author{ Olivier ZAHM\footnotemark[4], Marie BILLAUD-FRIESS\footnotemark[2], Anthony NOUY\footnotemark[2]\ \footnotemark[3] }

\renewcommand{\thefootnote}{\fnsymbol{footnote}}
\footnotetext[2]{Ecole Centrale de Nantes, GeM, UMR CNRS 6183, France.}
\footnotetext[3]{Corresponding author (anthony.nouy@ec-nantes.fr).}

\renewcommand{\thefootnote}{\arabic{footnote}}

\maketitle
\date{}

\renewcommand{\thefootnote}{\fnsymbol{footnote}}
\footnotetext[2]{Ecole Centrale de Nantes, GeM, UMR CNRS 6183, France.}
\footnotetext[3]{Corresponding author anthony.nouy@ec-nantes.fr.}
\footnotetext[4]{Department of Aeronautics \& Astronautics, Massachusetts Institute of Technology,
Cambridge, MA 02139, USA.}

\renewcommand{\thefootnote}{\arabic{footnote}}
\newcommand{\MV}{\mathrm{\bf V}}
\newcommand{\MW}{\mathrm{\bf W}}
\newcommand{\MT}{\mathrm{\bf T}}

\begin{abstract}
  We propose and compare goal-oriented projection based model order reduction methods for the estimation of vector-valued functionals of the solution of parameter-dependent equations. The first projection method is a generalization of the classical primal-dual method to the case of vector-valued variables of interest. We highlight the role played by three reduced spaces: the approximation space and the test space associated to the primal variable, and the approximation space associated to the dual variable. Then we propose a Petrov-Galerkin projection method based on a saddle point problem involving an approximation space for the primal variable and an approximation space for an auxiliary variable. A goal-oriented choice of the latter space, defined as the sum of two spaces, allows us to improve the approximation of the variable of interest compared to a primal-dual method using the same reduced spaces. Then, for both approaches, we derive computable error estimates for the approximations of the variable of interest and we propose greedy algorithms for the goal-oriented construction of reduced spaces. The performance of the algorithms are illustrated on numerical examples and compared to standard (non goal-oriented) algorithms.
\end{abstract}

 \section{Introduction}

This paper is concerned with the numerical solution of  linear equations of the form 
\begin{equation} \label{eq:paramdep}
A(\xi) u(\xi) = b(\xi),
\end{equation}
where the operator $A(\xi)$ and right-hand side $b(\xi)$ depend on a parameter $\xi$  which takes values in some parameter set $\Xi$.  Such equations arise in many contexts such as uncertainty quantification, optimization or control, where the solution of  \eqref{eq:paramdep} have to be evaluated with many instances of the parameters (multi-query context).
For large systems of equations (e.g. arising from a fine discretization of a parameter-dependent partial differential equation), solving \eqref{eq:paramdep} for one instance of the parameter can be very expensive,  which leads to intractable computations in a multi-query context. Model order reduction methods aim at constructing an approximation of the solution map $u: \Xi \to V$ whose evaluation for a certain value of $\xi$ is cheaper than solving \eqref{eq:paramdep}. Standard approaches rely on Galerkin-type projections of $u(\xi)$ on a low-dimensional subspace $V_r$ of the solution space $V$, a so-called \emph{reduced space}. The reduced space can be generated from evaluations (snapshots) of the solution $u(\xi)$ at some selected (or randomly chosen) values of the parameter $\xi$, see \cite{Berkooz93,Kahlbacher07,Prudhomme2002,Rozza2008}. The Proper Orthogonal Decomposition method  aims at constructing an optimal subspace for the approximation of the set of solutions $\mathcal{M}=\{u(\xi):\xi\in\Xi\}$ in a mean-square sense (see \cite{Kahlbacher07}). Reduced Basis (RB) methods (see \cite{haasdonk2014reduced} for a survey) aim at controlling the approximation uniformly over the parameter set. In this context, reduced spaces are usually constructed using greedy algorithms.

In many applications one is not interested in the solution $u(\xi)$ itself, but only in a variable of interest $s(\xi)$ which is a functional of $u(\xi)$. Here we assume that $s(\xi)$ depends linearly on $u(\xi)$. Efficient goal-oriented methods have been proposed for the estimation of a scalar-valued variable of interest $s(\xi)$. A standard method consists in computing an approximation of the solution of the so-called dual problem associated to \eqref{eq:paramdep} which is used to correct the estimation of $s(\xi)$. We refer to \cite{Pierce2000} for a general survey on primal-dual methods and to \cite{Chen10,Grepl2005,haasdonk2014reduced,Prudhomme2002} for the application in the context of RB methods.
~\\

{
In this paper, we propose projection based model order reduction methods for the estimation of a variable of interest $s(\xi)$ taking values in a vector space of finite or infinite dimension. 
We consider the case where
\begin{equation*}
 s(\xi) = L(\xi) u(\xi),
\end{equation*}
with $L(\xi)$ a parameter-dependent linear operator. 
For example, for boundary value problems, $L(\xi)$ can be defined as the trace operator providing the restriction of the solution to the boundary of the domain. In this case the variable of interest belongs to an infinite dimensional space or, after discretization, to a finite but possibly high dimensional space.
The standard approach, which consists in treating $s(\xi)$ as a collection of scalar-valued variables of interest and in building one reduced dual space for each of them, has a complexity which grows proportionally to the dimension of $s(\xi)$. Our approach circumvents this issue by constructing a single reduced dual space, thus allowing to handle variables of interest with high and potentially infinite dimension.
A similar approach can be found for parametric dynamical systems, see the monograph \cite{benner2005dimension} for a general introduction. In this framework, projection-based model order reduction methods are used for the approximation of $s(\xi)$ which is an output of the dynamical system. Petrov-Galerkin methods have been proposed with different ways of constructing the reduced basis for the test and trial space, such as the balanced truncation methods, (balanced) Proper Orthogonal Decomposition method, moment matching methods, etc. We refer to \cite{BGW2015} for a recent review on these methods. In the present paper, we aim at exploring other possibilities than the Petrov-Galerkin projection.
}
~\\

In a first part, we introduce and analyze different methods for computing projections of the solution and approximations of the variable of interest. 
We first present a non goal-oriented Petrov-Galerkin approach to compute an approximation of $u(\xi)$ from which an estimation of $s(\xi)$ is deduced.
Then, we introduce a generalization of the standard primal-dual method to the case of a vector-valued variable of interest, which relies on the approximation of the primal variable $u(\xi)$ and of the solution $Q(\xi)$ of the dual problem
\begin{equation*}
A(\xi)^* Q(\xi) = L(\xi)^*,
\end{equation*}
where $A(\xi)^*$ and $L(\xi)^*$ are the adjoints of operators $A(\xi)$ and $L(\xi)$ respectively. We show that  the error on the variable of interest depends on three reduced spaces:  the  approximation space $V_r$ for the primal variable $u(\xi)$,  the test space $W_r$ which is used for the Petrov-Galerkin projection of $u(\xi)$, and  an approximation space $W_k^Q$ for the dual variable $Q(\xi)$ which is projected on the space of $W_k^Q$-valued linear operators. 
{Finally}, we present a  Petrov-Galerkin method where the projection is obtained by solving a saddle point problem which involves an approximation space $V_r$ for $u(\xi)$ and an approximation space $T_p$ for an auxiliary variable. We show that if $T_p$ is {defined by} $T_p = W_r + W_k^Q$, then error bounds for both the projection of the primal variable on $V_r$ and the approximation of the variable of interest can be improved compared to error bounds of a primal-dual approach using the same spaces $V_r$, $W_r$ and $W_k^Q$.  The proposed  approach is a goal-oriented extension of the method proposed in \cite{Dahmen2013}. 
 
In a second part, we derive (for both approaches) computable error estimates for the approximation of the variable of interest. Then, we propose greedy algorithms based on these error estimates for the construction of the reduced spaces $V_r$ and $W^Q_k$. We discuss different choices for the reduced space $W_r$. In particular, we introduce a parameter-dependent space depending on a preconditioner obtained by means of an interpolation of the inverse of the operator $A(\xi)$ proposed in \cite{zahm2015interpolation}. 
\\\par
This paper is organized as follows. In Section \ref{sec:Analysis}, we introduce and analyze the different projection methods for the estimation of vector-valued variables of interest for general linear equations of the form \eqref{eq:paramdep} formulated in a Hilbert setting. Then, in Section \ref{sec:Application2MOR}, we derive error estimates for the approximation of the variable of interest and we propose practical greedy algorithms for the construction of reduced spaces. Finally, in Section \ref{sec:Numerical}, numerical experiments illustrate the properties of the projection methods and of the greedy algorithms. In particular, we provide a simplified complexity  analysis for the so-called \emph{offline} phase (\emph{i.e.} the construction of the reduced spaces) and for the \emph{online} phase (\emph{i.e.} the evaluation of $s(\xi)$ for a particular instance of $\xi$).

\section{Projection methods for the estimation of a variable of interest}\label{sec:Analysis}

Let $V$, $W$ and $Z$ be three Hilbert spaces. For a Hilbert space $H$ equipped with a norm $\|\cdot\|_H$, we denote by $H'$ the {topological dual space} of $H$. We consider the linear equation
\begin{equation}\label{eq:gen}
Au=b
\end{equation}
with $A \in \mathcal{ L}(V, W')$ and $b\in W'$, and a variable of interest 
\begin{equation*}
 s=Lu,
\end{equation*}
where $L\in \mathcal{L}(V,Z)$.
We assume that $A$ is a norm-isomorphism\footnote{$A$ is a norm-isomorphism if it is a continuous and weakly coercive operator satisfying the assumptions of the Ne\v cas' theorem \cite[Chapter 2]{Ern}. }
such that for all $u\in V$,
\begin{equation*}
\alpha \|u\|_V \le \|Au\|_{W'} \le \beta \|u\|_V, 
\end{equation*}
where   
\begin{subequations}\label{infsup}
\begin{align}
&\displaystyle \inf_{0\neq v \in V} \sup_{0\neq w\in W} \frac{\langle Av,w \rangle}{\Vert v \Vert_V \Vert w \Vert_W} := \alpha >0\label{infsup_a}\\
& \sup_{0\neq v \in V} \sup_{0\neq w\in W} \frac{\langle Av,w \rangle}{\Vert v \Vert_V \Vert w \Vert_W} := \beta <\infty,\label{infsup_b}
\end{align}
\end{subequations}
which ensures the well-posedness of \eqref{eq:gen}. In this section, we present different methods for constructing an approximation $\widetilde s$ of $s$. {First, in Section \ref{sec:PG}, we present a standard approach which consists in estimating the variable of interest from a Petrov-Galerkin projection of $u$. In Section \ref{sec:PD}, we present an extension of the primal-dual approach to the case of vector-valued variables of interest, where the  variable of interest is estimated from a standard Petrov-Galerkin projection of the primal variable and a projection of the solution of a dual problem. Finally, in Section \ref{sec:SaddlePoint}, we introduce a goal-oriented projection method based on a saddle-point formulation.}

Before going further, let us introduce some additional notations. For a Hilbert space $H$, we denote by $R_H \in \mathcal{L}(H,H')$ the Riesz map such that $\|v\|_H^2= {\langle R_H v,v \rangle}$, where $\langle\cdot , \cdot \rangle$ denotes the duality pairing. The dual norm $\|\cdot\|_{H'}$ on $H'$ is such that $R_{H'}=R_H^{-1}$. Then $ \| v\|_H=\| R_Hv \|_{H'} $ and $|\langle v,w \rangle|\leq \| v \|_H\| w \|_{H'}$ hold for any $v\in H$ and $w\in H'$. For any operator $C \in \mathcal{L}(H_1,H_2')$, with $H_1$ and $H_2$ two Hilbert spaces, $C^*\in \mathcal{L}(H_2,H_1')$ denotes the adjoint of $C$, such that $\langle Cv_1,v_2 \rangle = \langle v_1, C^* v_2 \rangle$ for any $v_1\in H_1$ and $v_2\in H_2$.

\subsection{Petrov-Galerkin projection}\label{sec:PG}

Suppose that we are given a subspace $V_r \subset V$ of finite dimension $r$ in which we seek an approximation of $u$. The orthogonal projection $u_r^\perp$ of $u$ on $V_r$, given by $\|u -u_r^\perp \|_V=\min_{v\in V_r}\| u-v \|_V$, is characterized by 
\begin{equation}\label{eq:def_orth_proj}
 \langle u-u_r^\perp,R_V v\rangle =0,\quad \forall v\in V_r.
\end{equation}
In practice, an approximation 
$u_r\in V_r$ can be defined as a Petrov-Galerkin projection of $u$ characterized by
\begin{equation}\label{eq:def_PG}
 \langle Au_r - b,y\rangle =0, \quad \forall y\in W_r,
\end{equation}
where $W_r\subset W$ is a test space of dimension $r$. Under the assumption that
\begin{align}
\alpha_{V_r,W_r}  = { \inf_{0\neq v \in V_r}  \sup_{0\neq y\in W_r} } \frac{\langle Av,y\rangle}{\Vert v \Vert_V \Vert y \Vert_W}>0,\label{eq:infsup_discrete_PG}
\end{align}
the next proposition provides a quasi-optimality result for $u_r$ and gives an error bound for the approximation of the variable of interest.
{In what follows, notation $\min$ (resp. $\max$) is used in place of $\inf$ (resp. $\sup$) when the minimum (resp. the maximum) is reached.}

\begin{proposition} \label{prop:1}
Under assumption \eqref{eq:infsup_discrete_PG},
the solution $u_r$ of equation \eqref{eq:def_PG} satisfies
 \begin{equation}
  \| u - u_r  \|_V \leq \frac{1}{\sqrt{1-(\delta_{V_r,W_r})^2}} \min_{v\in V_r}\| u-v \|_{V}. \label{eq:control_PG}
 \end{equation}
 where 
  \begin{equation}
  \delta_{V_r,W_r} = \max_{0\neq v\in V_r}\min_{ y\in W_r}\frac{\| v -R_V ^{-1}A^*y \|_V}{\| v \|_V}<1. \label{eq:def_delta_r} 
 \end{equation}
 Furthermore,
 \begin{align}
  \| s - Lu_r \|_Z \leq \frac{\delta_{W_r}^L}{\sqrt{1-(\delta_{V_r,W_r})^2}} \min_{v\in V_r}\| u-v \|_{V},\label{eq:control_VI_PG_2}
 \end{align}
 with
 \begin{equation}
 \delta_{W_r}^L = \sup_{0\neq z'\in Z'} \min_{y\in W_r} \frac{\| L^* z' -A^{*}y \|_{V'}}{\| z' \|_{Z'}}. \label{eq:def_delta_r_L}
 \end{equation}

\end{proposition}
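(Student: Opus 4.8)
The plan is to interpret the Petrov--Galerkin condition \eqref{eq:def_PG} as an orthogonality relation in $V$ and to reduce both bounds to an estimate on the error $e := u - u_r$. First I would rewrite \eqref{eq:def_PG}: since $\langle Ae, y\rangle = \langle e, A^* y\rangle = (e, R_V^{-1} A^* y)_V$, where $(v,w)_V := \langle R_V v, w\rangle$ denotes the inner product of $V$, the condition $\langle A(u - u_r), y\rangle = 0$ for all $y \in W_r$ is equivalent to
\begin{equation*}
 (u - u_r, \tilde y)_V = 0 \quad \text{for all } \tilde y \in \widetilde W_r := R_V^{-1} A^* W_r.
\end{equation*}
Thus $u_r$ is the oblique projection of $u$ onto $V_r$ along $\widetilde W_r^{\perp}$, and the quantity $\delta_{V_r,W_r}$ of \eqref{eq:def_delta_r} is exactly $\max_{0\neq v\in V_r}\mathrm{dist}_V(v,\widetilde W_r)/\|v\|_V$, the largest relative distance from $V_r$ to $\widetilde W_r$. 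A preliminary observation is that $\delta_{V_r,W_r}<1$ is equivalent to assumption \eqref{eq:infsup_discrete_PG}: both amount to $V_r \cap \widetilde W_r^{\perp} = \{0\}$, since $\mathrm{dist}_V(v,\widetilde W_r) = \|v\|_V$ precisely when $v \perp_V \widetilde W_r$.

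For \eqref{eq:control_PG} I would compare $u_r$ with the $V$-orthogonal projection $u_r^\perp$ of $u$ onto $V_r$. Setting $e^\perp := u - u_r^\perp \perp_V V_r$ and $w := u_r^\perp - u_r \in V_r$, we have the orthogonal splitting $u - u_r = e^\perp + w$, so that $\|u-u_r\|_V^2 = \|e^\perp\|_V^2 + \|w\|_V^2$ by Pythagoras. The heart of the argument is to bound $\|w\|_V$. Choosing $\tilde w \in \widetilde W_r$ with $\|w - \tilde w\|_V \le \delta_{V_r,W_r}\|w\|_V$ (possible since $w\in V_r$) and using the two orthogonalities $(u - u_r)\perp_V\widetilde W_r$ and $e^\perp\perp_V V_r$, one rewrites $\|w\|_V^2 = (u - u_r,\, w - \tilde w)_V$, whence $\|w\|_V \le \delta_{V_r,W_r}\|u-u_r\|_V$ by Cauchy--Schwarz. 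Inserting this into the Pythagorean identity and solving for $\|u - u_r\|_V$ (here $\delta_{V_r,W_r}<1$ is used) yields \eqref{eq:control_PG}, since $\|e^\perp\|_V = \min_{v\in V_r}\|u - v\|_V$.

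Finally, \eqref{eq:control_VI_PG_2} follows by showing $\|L e\|_Z \le \delta^L_{W_r}\|e\|_V$ and composing with \eqref{eq:control_PG}. By duality, $\|Le\|_Z = \sup_{0\neq z'\in Z'}\langle Le, z'\rangle/\|z'\|_{Z'}$ and $\langle Le, z'\rangle = \langle e, L^* z'\rangle$. The Galerkin orthogonality $\langle Ae, y\rangle = \langle e, A^* y\rangle = 0$ for $y\in W_r$ lets me subtract an arbitrary $A^* y$, so that $\langle e, L^* z'\rangle = \langle e,\, L^* z' - A^* y\rangle \le \|e\|_V\,\|L^* z' - A^* y\|_{V'}$; minimizing over $y\in W_r$ and taking the supremum over $z'$ reproduces $\delta^L_{W_r}$ from \eqref{eq:def_delta_r_L}. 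I expect the main obstacle to be the rewriting $\|w\|_V^2 = (u-u_r, w-\tilde w)_V$ in the proof of \eqref{eq:control_PG}: it requires combining the oblique orthogonality and the best-approximation orthogonality in the right order, whereas the remaining steps (Pythagoras, Cauchy--Schwarz, and the duality argument for the functional) are routine.
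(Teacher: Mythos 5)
Your proof is correct and, for the two bounds themselves, is exactly the paper's argument written in inner-product form: your identity $\|w\|_V^2=(u-u_r,\,w-\tilde w)_V$ with $w=u_r^\perp-u_r$ and $\tilde w\in R_V^{-1}A^*W_r$, followed by Cauchy--Schwarz and Pythagoras, is precisely the paper's chain $\langle u_r^\perp-u_r,R_Vv\rangle=\langle u-u_r,R_Vv-A^*y\rangle\le\|u-u_r\|_V\|R_Vv-A^*y\|_{V'}$ specialized to $v=w$, and your duality argument for \eqref{eq:control_VI_PG_2} matches the paper's verbatim. The only divergence is the sub-step $\delta_{V_r,W_r}<1$: you argue qualitatively that $\delta_{V_r,W_r}<1$, the condition $V_r\cap\bigl(R_V^{-1}A^*W_r\bigr)^\perp=\{0\}$, and assumption \eqref{eq:infsup_discrete_PG} are all equivalent (valid here by compactness, since $V_r$ and $W_r$ are finite-dimensional), whereas the paper proves the quantitative inequality $\delta_{V_r,W_r}^2\le 1-\alpha_{V_r,W_r}^2/\beta^2$ of \eqref{ineq_delta_alpha}. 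Both settle the proposition; the paper's version carries extra information that it reuses in the comparison with C\'ea's lemma (namely $1/\sqrt{1-\delta_{V_r,W_r}^2}\le\beta_{W_r}/\alpha_{V_r,W_r}$), while yours shows in addition that \eqref{eq:infsup_discrete_PG} is not only sufficient but necessary for $\delta_{V_r,W_r}<1$, though it yields no rate.
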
 

\begin{proof}
 With $u_r^\perp$  the orthogonal projection of $u$ on $V_r$, for any $v\in V_r \setminus \{0\}$ and $ y\in W_r$, we have
 \begin{align*}
  \langle u_r^\perp-u_r,R_V v \rangle &\overset{\eqref{eq:def_orth_proj}}{=} \langle u-u_r,R_V v \rangle = \langle b-Au_r,A^{-*}R_V v \rangle \\
  &\overset{\eqref{eq:def_PG}}{=} \langle b-Au_r,A^{-*}R_V v -y \rangle = \langle u-u_r,R_V v -A^{*}y \rangle \\
  &\leq \| u-u_r \|_{V}  \| R_V v -A^{*}y  \|_{V'}.
 \end{align*}
 Taking the minimum over $  y\in W_r$, dividing by $\| v \|_V$ and taking the maximum over $v\in V_r \setminus\{0\}$, we obtain $ \| u_r^\perp - u_r \|_V \leq \delta_{V_r,W_r} \| u-u_r \|_{V} $, where $\delta_{V_r,W_r}$ is defined by \eqref{eq:def_delta_r}. Thanks to the orthogonality condition \eqref{eq:def_orth_proj} we have
  $ \| u-u_r \|_V^2 = \| u-u_r^\perp \|_V^2 + \| u_r^\perp-u_r\|_V^2$, from which we deduce that 
  $(1-\delta_{V_r,W_r}^2)  \| u-u_r \|_V^2 \le  \| u-u_r^\perp \|_V^2 $. To prove  \eqref{eq:control_PG}, it remains to prove that $\delta_{V_r,W_r}<1$. Noting that 
  \begin{align*}
  \min_{ y\in W_r}\| v -R_V ^{-1}A^*y \|_V^2 &=  \min_{ 0\neq y\in W_r} \min_{\lambda\in\mathbb{R}}\| v -\lambda R_V ^{-1}A^*y \|_V^2 =   \min_{ 0\neq y\in W_r} \| v \|_V^2 -  \frac{\langle v,A^*y\rangle^2}{\| A^*y \|_{V'}^2},
  \end{align*}
we obtain 
\begin{align}
\delta_{V_r,W_r}^2  &= 1 - \min_{0\neq v\in V_r}\max_{ 0\neq y\in W_r}\frac{\langle Av,y\rangle^2}{\| v \|_V^2 \| A^*y \|_{V'}^2}.\label{deltar_relation1}
\end{align}
{Let introduce $\beta_{W_r} = \sup_{0\neq y_r \in W_r} \| A^* y \|_{V'} / \Vert y \Vert_W$ which, from assumption \eqref{infsup_b}, satifies $\beta_{W_r} \leq \beta $. Then using assumption \eqref{eq:infsup_discrete_PG} we obtain }
\begin{align}
\delta_{V_r,W_r}^2 \le 1 - \frac{\alpha_{V_r,W_r}^2}{\beta_{W_r}^2} \le 1 - \frac{\alpha_{V_r,W_r}^2}{\beta^2} <1. \label{ineq_delta_alpha}
\end{align}
   Furthermore for any $z'\in Z' \setminus\{0\}$ and $ y\in W_r$, we have
  \begin{align*}
    \langle s-Lu_r, z' \rangle &= \langle b-Au_r, A^{-*}L^* z' \rangle \overset{\eqref{eq:def_PG}}{=} \langle b-Au_r, A^{-*}L^* z' -y \rangle \\
    &\leq \| u-u_r \|_{V}  \| L^* z' -A^{*}y  \|_{V'}.
  \end{align*}
  Taking the infimum over $ y\in W_r$, dividing by $\| z' \|_{Z'}$ and taking the supremum over $z' \in Z' \setminus\{0\}$, we obtain \eqref{eq:control_VI_PG_2} thanks to \eqref{eq:control_PG}.
\end{proof}

The error bound \eqref{eq:control_VI_PG_2} for the approximation of the variable of interest $s$ is the product of three terms: 
\begin{itemize}
 \item[(a)] $\inf_{v\in V_r}\| u-v \|_V$, which suggests that the approximation space $V_r$ should be defined such that $u$ can be well approximated in $V_r$, 
 \item[(b)] $(1-(\delta_{V_r,W_r})^2)^{-1/2}$, which suggests that the test space $W_r$ should be chosen such that any element of $V_r$ can be well approximated by an element of $R_V^{-1}A^* W_r$, and 
 \item[(c)] $\delta^L_{W_r}$, which suggests that any element of $\text{range}(L^*)$ should be well approximated by an element of $A^*W_r$.
\end{itemize}
As already noticed in  \cite[Section 11.1]{Rozza2008}, $W_r$ plays a double role: a test space for the definition of $u_r$ (point (b)) and an approximation space for the range of $A^{-*}L^*$ (point (c)). 

{
\begin{remark}
 The proposed Petrov-Galerkin projection method coincides with the interpolatory projection method used in the context of parametric dynamical systems (see \cite{baur2011,BGW2015}). Our analysis provides quasi-optimality results on $s(\xi)$ for any parameter value $\xi$. Also, the condition $\delta_{V_r,W_r}(\xi) > 0$ ensures the invertibility of the reduced operator $A_r(\xi) : V_r \to W_r'$ defined by $\langle A_r(\xi) v , y \rangle = \langle A(\xi) u_r , y \rangle$ for all $v\in V_r$ and $y\in W_r$. In \cite{baur2011}, the invertibility of $A_r(\xi)$ is not discussed in the time-independent case. 
\end{remark}
}

\begin{remark}[Comparison with the C\'ea's Lemma]
~Under assumption \eqref{eq:infsup_discrete_PG}, ~the classical C\'ea's lemma states that
\begin{align}
  \| u - u_r  \|_V \leq (1+\frac{\beta}{\alpha_{V_r,W_r}})~\min_{v\in V_r}\| u-v \|_{V}.\label{cea}
\end{align}
{
It has been shown in \cite{Xu} that this can be improved to
\begin{align}\label{eq:improved_cea}
  \| u - u_r  \|_V \leq \frac{\beta}{\alpha_{V_r,W_r}}\min_{v\in V_r}\| u-v \|_{V}.
\end{align}
Noting that Equation \eqref{ineq_delta_alpha} yields
\begin{equation*}
 \frac{1}{\sqrt{1-(\delta_{V_r,W_r})^2}} \le  \frac{\beta_{W_r}}{\alpha_{V_r,W_r}} \le  \frac{\beta}{\alpha_{V_r,W_r}},
\end{equation*}
we observe that \eqref{eq:control_PG} provides a sharper bound than in \eqref{eq:improved_cea}, where the constants differ by a factor $\beta_{W_r}/\beta$.
}

\end{remark}

\begin{remark}[Symmetric coercive case and compliant case]\label{rmk:SPD_1}
 We suppose that $A$ is a symmetric coercive operator, with $V=W$ and $\Vert \cdot\Vert_V = \Vert \cdot\Vert_W$ the norm induced by the operator $A$ such that $R_V=A$. Then $\delta_{V_r,W_r}$ defined by \eqref{eq:def_delta_r} admits the following simple expression
 \begin{equation*}
  \delta_{V_r,W_r} = \min_{0\neq v\in V_r}\max_{ y\in W_r}\frac{\| v - y \|_V}{\| v \|_V}.
 \end{equation*}
 If the test space $W_r$ is defined by $W_r=V_r$, we obtain $\delta_{V_r,W_r}=0$, and from \eqref{eq:control_PG}, we obtain $u_r=u_r^\perp$. In other words, the standard Galerkin projection coincides with the orthogonal projection.
 
 In the case where the variable of interest $s$ is scalar-valued, we have $Z=\mathbb{R}$ and $\mathcal{L}(V,Z)=V'$.
 The so-called \textit{compliant case} corresponds to $Lv=\langle b,v\rangle$ for any $v \in V$. Then, by definition \eqref{eq:def_delta_r_L}, we have 
 \begin{equation*}
  \delta_{W_r}^L=\min_{v\in V_r}\| b-Av \|_{V'} = \min_{v\in V_r}\| u-v \|_V =\| u-u_r \|_V,
 \end{equation*}
 and thanks to \eqref{eq:control_VI_PG_2}, we recover the so-called ``squared effect''
 \begin{equation*}
   | s - Lu_r | = \| s - Lu_r \|_Z \leq \|u-u_r \|^2_V.
 \end{equation*}
\end{remark}

\subsection{Primal-dual approach}\label{sec:PD}
~We now extend the classical primal-dual approach \cite{Pierce2000} for the estimation of a vector-valued variable of interest.\\

Let us introduce the dual variable $Q\in\mathcal{L}(Z',W)$ defined by $A^*Q = L^*$. The relation
\begin{equation*}
 s = Lu = Q^*Au = Q^* b
\end{equation*}
shows that the variable of interest can be exactly determined if either the primal variable $u$ or the dual variable $Q$ is known. \\

Now,  for given  approximations $\widetilde u$ of $u$ and $\widetilde Q$ of $Q$, we define the approximation  $\tilde s$ of $s$ by 
 \begin{equation}\label{eq:def_st_uQ}
  \widetilde s = L\widetilde u + \widetilde Q^*(b-A\widetilde u),
 \end{equation}
where $L\widetilde u$ is the standard estimation of the variable of interest and where $\widetilde Q^*(b-A\widetilde u)$ is a correction using the approximation of the dual variable. The following proposition provides an error bound on the variable of interest, which is a generalization of the classical error bound for scalar-valued variables of interest (see \cite{Pierce2000}) to vector-valued variables of interest.\\

\begin{proposition}\label{prop:ClassicalBounds}
The approximation $\tilde s$ of $s$ defined by \eqref{eq:def_st_uQ} satisfies
 \begin{equation}\label{eq:bound_s_uQ}
  \| s-\widetilde s\|_Z \leq \| u-\widetilde u\|_V \| L^* - A^* \widetilde Q \|_{Z'\rightarrow V'},
 \end{equation}
 where
 \begin{equation}
  \| L^* - A^* \widetilde Q \|_{Z'\rightarrow V'} = \sup_{0\neq z'\in Z'} \frac{\| (L^* - A^* \widetilde Q)z' \|_{V'}}{\| z' \|_{Z'}}.
 \end{equation}

\end{proposition}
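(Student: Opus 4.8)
The plan is to reduce \eqref{eq:bound_s_uQ} to a direct computation of the error $s-\widetilde s$ that runs in close parallel to the proof of Proposition \ref{prop:1}. First I would substitute the definition \eqref{eq:def_st_uQ} of $\widetilde s$ together with the exact relation $s=Lu$ and the governing equation $Au=b$. The crucial algebraic simplification is that, since $b=Au$, the residual satisfies $b-A\widetilde u = A(u-\widetilde u)$. This lets me factor the error entirely through the primal error $u-\widetilde u$:
\[
s-\widetilde s = L(u-\widetilde u) - \widetilde Q^* A(u-\widetilde u) = \bigl(L-\widetilde Q^* A\bigr)(u-\widetilde u).
\]

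Next, following the duality argument already used for \eqref{eq:control_VI_PG_2}, I would test this identity against an arbitrary $z'\in Z'\setminus\{0\}$ and move each operator onto its adjoint via the relation $\langle C v_1, v_2\rangle = \langle v_1, C^* v_2\rangle$. This turns the pairing into
\[
\langle s-\widetilde s, z'\rangle = \langle u-\widetilde u,\,(L^*-A^*\widetilde Q)z'\rangle,
\]
using $(\widetilde Q^* A)^* = A^*\widetilde Q$. Then the inequality $|\langle v,w\rangle|\le \|v\|_V\|w\|_{V'}$ followed by the definition of the operator norm $\|L^*-A^*\widetilde Q\|_{Z'\rightarrow V'}$ bounds the right-hand side by $\|u-\widetilde u\|_V\,\|L^*-A^*\widetilde Q\|_{Z'\rightarrow V'}\,\|z'\|_{Z'}$. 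Dividing by $\|z'\|_{Z'}$ and taking the supremum over $z'\neq 0$ recovers $\|s-\widetilde s\|_Z$ on the left and yields \eqref{eq:bound_s_uQ}.

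There is no serious obstacle here: the content is the factorization in the first step, and everything afterward is the same estimate-by-duality machinery exercised in Proposition \ref{prop:1}. The only points demanding care are the bookkeeping of the various dual spaces and the identity $(\widetilde Q^* A)^* = A^*\widetilde Q$, which relies on reflexivity so that $(\widetilde Q^*)^*=\widetilde Q$. Working directly with the pairing $\langle s-\widetilde s, z'\rangle$, rather than with the norm $\|L-\widetilde Q^* A\|_{V\rightarrow Z}$, sidesteps having to invoke equality of an operator norm with its adjoint norm, so the duality route is the cleaner one to present.
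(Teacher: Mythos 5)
Your proposal is correct and follows essentially the same route as the paper's proof: factor the error as $s-\widetilde s=(L-\widetilde Q^*A)(u-\widetilde u)$ using $b=Au$, pass to the adjoint in the duality pairing with $z'\in Z'$, apply the bound $|\langle v,w\rangle|\le \|v\|_V\|w\|_{V'}$, and take the supremum over $z'\neq 0$. The only cosmetic difference is that you invoke the operator norm before dividing by $\|z'\|_{Z'}$, whereas the paper divides first; the argument is identical.
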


\begin{proof}
 For any $z'\in Z'$, we have
 \begin{align*}
  \langle s-\widetilde s,z' \rangle &= \langle Lu-L\widetilde u - \widetilde Q^*(b-A\widetilde u), z' \rangle 
  = \langle (L-\widetilde Q^*A)(u-\widetilde u), z' \rangle \\
  &= \langle u-\widetilde u,(L^*- A^*\widetilde Q) z' \rangle 
  \leq \| u-\widetilde u\|_V \| (L^*- A^*\widetilde Q) z' \|_{V'}.
 \end{align*}
 Dividing by $\| z' \|_{Z'}$ and taking the supremum over $z'\in Z'\setminus \{0\}$, we obtain \eqref{eq:bound_s_uQ}.
\end{proof}

~\\
In practice, the approximation $\widetilde u $ can be defined as the Petrov-Galerkin projection $u_r$ of $u$ on a given approximation space $V_r$ with a given test space $W_r$, see equation \eqref{eq:def_PG}. For the approximation $\widetilde Q$ of $Q \in \mathcal{L}(Z',W)$, the bound \eqref{eq:bound_s_uQ} suggests that $\| L^* - A^* \widetilde Q \|_{Z'\rightarrow V'}$ should be small. We then propose to choose $\widetilde Q$ as a solution of
\begin{equation}
\min_{\widetilde Q\in \mathcal{L}(Z',W_k^Q)} \| L^* - A^* \widetilde Q \|_{Z'\rightarrow V'} \label{eq:optimal_Qk},
\end{equation}
where $W_k^Q\subset W$ is a given approximation space (different from $W_r$). The next proposition shows how to construct a solution of \eqref{eq:optimal_Qk}.\\

\begin{proposition} \label{def:qk}
 The operator $Q_k: Z'\rightarrow W_k^Q$ defined for $z'\in Z'$ by
 \begin{equation}
Q_k z'   =\arg\min_{y_k\in W_k^Q} \| L^*z' - A^* y_k \|_{V'} \label{eq:def_Qk_1}
 \end{equation}
 is linear and is a solution of \eqref{eq:optimal_Qk}. Moreover $Q_kz' \in W_k^Q$ is characterized by
 \begin{equation}\label{eq:def_Qk_2}
  \langle  L^*z'- A^* Q_k z', R_V^{-1} A^* y_k \rangle =0, \qquad \forall y_k\in W_k^Q.
 \end{equation}
 
\end{proposition}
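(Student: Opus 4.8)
The plan is to establish three claims in sequence: linearity of the map $z'\mapsto Q_k z'$, the optimality characterization \eqref{eq:def_Qk_2}, and finally the fact that $Q_k$ solves \eqref{eq:optimal_Qk}.

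The key geometric observation driving everything is that $Q_k z'$ is, by definition, the best approximation of $L^* z' \in V'$ by elements of the subspace $A^* W_k^Q \subset V'$, measured in the dual norm $\|\cdot\|_{V'}$. Since $\|\cdot\|_{V'}$ is a Hilbert-space norm on $V'$ (induced by the Riesz map $R_{V'} = R_V^{-1}$), this is a genuine Hilbert-space projection problem onto the finite-dimensional subspace $A^* W_k^Q$. The first thing I would do is verify that this subspace is well-defined and that the minimizer exists and is unique \emph{as an element of $V'$}: existence is automatic since we project onto a closed finite-dimensional subspace, but to conclude that $y_k = Q_k z'$ is well-defined I need $A^*$ to be injective on $W_k^Q$, which follows from \eqref{infsup_a} (the inf-sup condition forces $A^*$ to be bounded below, hence injective). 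Granting this, $Q_k z'$ is unambiguous.

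Next I would derive the characterization \eqref{eq:def_Qk_2}. This is just the standard first-order optimality (normal equations) for a least-squares problem in the Hilbert space $(V', \|\cdot\|_{V'})$: the minimizer $A^* Q_k z'$ of $\|L^* z' - A^* y_k\|_{V'}$ over the subspace $\{A^* y_k : y_k \in W_k^Q\}$ is characterized by orthogonality of the residual $L^* z' - A^* Q_k z'$ to that subspace. Since the inner product on $V'$ is $\langle \varphi, \psi\rangle_{V'} = \langle R_V^{-1}\varphi, \psi\rangle$, orthogonality to each $A^* y_k$ reads exactly $\langle L^* z' - A^* Q_k z', R_V^{-1} A^* y_k\rangle = 0$, which is \eqref{eq:def_Qk_2}. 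Linearity of $Q_k$ then follows immediately: the normal equations \eqref{eq:def_Qk_2} are linear in $z'$ (the right-hand sides depend linearly on $z'$ through $L^* z'$), and since $A^*$ is injective on $W_k^Q$ the solution $Q_k z' \in W_k^Q$ depends linearly on $z'$. So $Q_k \in \mathcal{L}(Z', W_k^Q)$.

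The \textbf{last and only substantive step} is to show $Q_k$ actually solves the operator-norm minimization \eqref{eq:optimal_Qk}, namely that minimizing the sup-over-$z'$ of the pointwise residuals is achieved by minimizing each pointwise residual separately. This is where I expect the only real subtlety: a pointwise minimizer need not in general minimize a supremum. The argument I would give is a simple pointwise-domination bound: for \emph{any} competitor $\widetilde Q \in \mathcal{L}(Z', W_k^Q)$ and any $z'$, the definition \eqref{eq:def_Qk_1} gives $\|L^* z' - A^* Q_k z'\|_{V'} \le \|L^* z' - A^*(\widetilde Q z')\|_{V'}$ because $\widetilde Q z' \in W_k^Q$ is a feasible point in the pointwise problem. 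Dividing by $\|z'\|_{Z'}$ and taking the supremum over $z' \in Z'\setminus\{0\}$ preserves the inequality, yielding $\|L^* - A^* Q_k\|_{Z'\to V'} \le \|L^* - A^*\widetilde Q\|_{Z'\to V'}$. Thus $Q_k$ is a minimizer of \eqref{eq:optimal_Qk}. The crux is recognizing that the \emph{same} feasible set $W_k^Q$ is available at every $z'$, so a uniform (in $z'$) choice of the pointwise-optimal element $Q_k z'$ is admissible in \eqref{eq:optimal_Qk}, which is precisely what linearity of $Q_k$ guarantees.
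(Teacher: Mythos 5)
Your proposal is correct and takes essentially the same route as the paper's proof: the characterization \eqref{eq:def_Qk_2} as the normal (Euler) equations of the pointwise least-squares problem \eqref{eq:def_Qk_1}, and optimality for \eqref{eq:optimal_Qk} via the pointwise domination $\| L^*z' - A^* Q_k z' \|_{V'} \le \| L^*z' - A^* \widetilde Q z' \|_{V'}$ for any competitor $\widetilde Q$, followed by dividing by $\|z'\|_{Z'}$ and taking the supremum. Your extra details (injectivity of $A^*$ on $W_k^Q$, the Hilbert-projection framing in $V'$) simply make explicit what the paper dismisses with ``we easily prove.''
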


\begin{proof}
We easily prove that the optimization problem \eqref{eq:def_Qk_1} admits a unique solution which depends linearly and continuously on $z'$, so that $Q_k$ defined by \eqref{eq:def_Qk_1} is a linear operator in $\mathcal{L}(Z',W_k^Q)$. Equation \eqref{eq:def_Qk_2} is the Euler equation associated to the minimization problem \eqref{eq:def_Qk_1}. Furthermore for any $\widetilde Q \in \mathcal{L}(Z',W_k^Q)$ and $z'\in Z'\setminus \{0\}$, we have
\begin{equation*}
 \frac{\| L^*z' - A^* Q_kz' \|_{V'}}{\| z' \|_{Z'}} \overset{\eqref{eq:def_Qk_1}}{\leq} \frac{\| L^*z' - A^* \widetilde Q z' \|_{V'}}{\| z' \|_{Z'}} \leq \| L^*- A^* \widetilde Q \|_{Z'\rightarrow V'}.
\end{equation*}
Taking the supremum over $z'\in Z'\setminus\{0\}$ and then the infimum over $\widetilde Q \in \mathcal{L}(Z',W_k^Q)$, we obtain that $\| L^*- A^*  Q_k \|_{Z'\rightarrow V'} \le \inf_{\widetilde Q\in \mathcal{L}(Z',W_k^Q)}\| L^*- A^* \widetilde Q \|_{Z'\rightarrow V'}$, which means that $Q_k\in \mathcal{L}(Z',W_k^Q)$ is a solution of \eqref{eq:optimal_Qk}.
\end{proof}

~\\
In practice, for computing the approximation of the variable of interest \eqref{eq:def_st_uQ} with $\widetilde Q=Q_k$, we only need to compute $Q_k^*(b-Au_r)$. The following lemma shows how this can be performed without computing the operator $Q_k$.\\

\begin{lemma}\label{lem:compute_QtR}
 Let $Q_k$ be defined by \eqref{eq:def_Qk_1}. Then for $r = b-Au_r \in W'$, 
 \begin{equation}
 Q_k^*r = LR_V^{-1}A^*y_k^\perp, 
 \end{equation}
 where $y_k^\perp\in W_k^Q$ is defined by
 \begin{equation}
  \langle AR_V^{-1}A^* y_k^\perp,y_k\rangle = \langle r,y_k\rangle , \qquad \forall y_k\in W_k^Q.\label{eq:def_ystar}
 \end{equation}
\end{lemma}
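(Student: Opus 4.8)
The plan is to prove the identity weakly: since both sides are elements of $Z$, it suffices to show that $\langle z', Q_k^* r\rangle = \langle z', LR_V^{-1}A^*y_k^\perp\rangle$ for every $z'\in Z'$. By the definition of the adjoint $Q_k^*\in\mathcal{L}(W',Z)$, the left-hand side equals $\langle Q_kz', r\rangle$, so the whole task reduces to rewriting $\langle Q_kz',r\rangle$ by means of the two variational characterizations at our disposal, namely the definition \eqref{eq:def_ystar} of $y_k^\perp$ and the optimality condition \eqref{eq:def_Qk_2} for $Q_kz'$.

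The key observation is that both $Q_kz'$ and $y_k^\perp$ belong to the same space $W_k^Q$, so each may legitimately be used as a test function in the equation defining the other. First I would use \eqref{eq:def_ystar} with the test function $y_k=Q_kz'\in W_k^Q$ to replace $r$, obtaining $\langle Q_kz',r\rangle=\langle AR_V^{-1}A^*y_k^\perp,\, Q_kz'\rangle$; transferring $A$ onto the second factor through its adjoint turns this into $\langle R_V^{-1}A^*y_k^\perp,\, A^*Q_kz'\rangle$. Next I would invoke the Euler equation \eqref{eq:def_Qk_2} with the test function $y_k=y_k^\perp\in W_k^Q$, which states precisely that $\langle A^*Q_kz',\, R_V^{-1}A^*y_k^\perp\rangle=\langle L^*z',\, R_V^{-1}A^*y_k^\perp\rangle$, thereby replacing $A^*Q_kz'$ by $L^*z'$. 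Finally, transferring $L$ onto the other factor through its adjoint yields $\langle z',\, LR_V^{-1}A^*y_k^\perp\rangle$. Schematically,
\begin{equation*}
\langle z',Q_k^*r\rangle=\langle Q_kz',r\rangle \overset{\eqref{eq:def_ystar}}{=}\langle R_V^{-1}A^*y_k^\perp,\, A^*Q_kz'\rangle \overset{\eqref{eq:def_Qk_2}}{=}\langle R_V^{-1}A^*y_k^\perp,\, L^*z'\rangle=\langle z',\, LR_V^{-1}A^*y_k^\perp\rangle.
\end{equation*}
Since $z'\in Z'$ is arbitrary, the claimed identity $Q_k^*r=LR_V^{-1}A^*y_k^\perp$ follows.

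The only genuine obstacle here is conceptual rather than computational: $Q_k$ is defined only implicitly as the solution of the minimization \eqref{eq:def_Qk_1}, so there is no closed-form expression to substitute and one cannot evaluate $Q_kz'$ directly. The crux is exactly the cross-testing step above, recognizing that $y_k^\perp$ is an admissible test function in the optimality condition for $Q_kz'$ and, conversely, that $Q_kz'$ is admissible in the definition of $y_k^\perp$, which lets both variational identities act on the same pair of vectors and makes the implicitly defined operator $Q_k$ cancel out. The remaining manipulations are merely routine applications of the adjoint relations for $A$ and $L$ together with the symmetry of the duality pairing; one should only take minor care to keep track of which pairing (between $W$ and $W'$, between $V$ and $V'$, or between $Z$ and $Z'$) is in play at each step.
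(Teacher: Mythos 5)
Your proof is correct and is essentially identical to the paper's own argument: both rely on the same cross-testing idea, taking $y_k = Q_k z'$ in \eqref{eq:def_ystar} and $y_k = y_k^\perp$ in \eqref{eq:def_Qk_2}, then combining the two identities and using the adjoint relations to conclude weakly against arbitrary $z' \in Z'$. No gaps; the handling of the duality pairings and adjoints is accurate throughout.
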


\begin{proof}
 For any $z'\in Z'$, since $Q_kz'\in W_k^Q$, we have
 \begin{equation}\label{tmp:9345}
  \langle Q_k z', AR_V^{-1} A^* y_k^\perp \rangle \overset{\eqref{eq:def_ystar}}{=} \langle Q_kz', r \rangle. 
 \end{equation}
 Furthermore, by definition of $Q_k$ we have
 \begin{equation}\label{tmp:3659}
  \langle Q_k z', AR_V^{-1} A^* y_k^\perp \rangle  \overset{\eqref{eq:def_Qk_2}}{=} \langle L^*z' , R_V^{-1}A^* y_k^\perp\rangle.
 \end{equation}
 Combining \eqref{tmp:9345} and \eqref{tmp:3659}, we obtain $ \langle z', Q_k^* r \rangle = \langle z', L R_V^{-1}A^* y_k^\perp \rangle$ for all $z'\in Z'$, which concludes the proof.
\end{proof}

~\\
{We give now a new bound of the error on the variable of interest.}

\begin{proposition}\label{prop:NewBounds}
  Let $\widetilde u=u_r$ be the Petrov-Galerkin  projection defined by \eqref{eq:def_PG} and let $\widetilde Q=Q_k$ be defined by \eqref{eq:def_Qk_2}. Then the approximation $\widetilde s$ defined by \eqref{eq:def_st_uQ} satisfies
 \begin{align}
  \| s - \widetilde s \|_Z &\leq \delta_{W_k^Q}^L \min_{y_k\in W_k^Q} \| u-u_r - R_V^{-1}A^* y_k \|_{V} , \label{eq:control_VI_PD_1}
 \end{align}
 where
 \begin{align}
  \delta_{W_k^Q}^L &= \sup_{0\neq  z'\in Z'}\min_{y\in W_k^Q} \frac{\| L^*z' -A^* y \|_{V'}}{\| z' \|_{Z'}}.
 \end{align}
 Moreover,
  \begin{align}
  \| s - \widetilde s \|_Z \leq \frac{\delta_{W_k^Q}^L}{\sqrt{1- (\delta_{V_r,W_r})^2}}  \min_{v\in V_r} \| u-v \|_{V}. \label{eq:control_VI_PD_2}
 \end{align}
 
\end{proposition}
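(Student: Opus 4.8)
The plan is to specialize the identity underlying Proposition~\ref{prop:ClassicalBounds} and then sharpen it using the optimality (Euler) equation \eqref{eq:def_Qk_2} satisfied by $Q_k$. Setting $\widetilde u = u_r$ and $\widetilde Q = Q_k$ in the opening lines of the proof of Proposition~\ref{prop:ClassicalBounds}, I would start from the exact identity
\begin{equation*}
\langle s-\widetilde s, z'\rangle = \langle u-u_r,\; L^*z' - A^*Q_k z'\rangle, \qquad \forall z'\in Z'.
\end{equation*}
Bounding this directly by $\|u-u_r\|_V\,\|L^*z'-A^*Q_kz'\|_{V'}$ would only reproduce the coarse estimate of Proposition~\ref{prop:ClassicalBounds}; it wastes the fact that $u-u_r$ need only be measured against the space $R_V^{-1}A^*W_k^Q$.

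The key step is to insert a correction at no cost. For any $y_k\in W_k^Q$, the characterization \eqref{eq:def_Qk_2} of $Q_k$ gives $\langle L^*z'-A^*Q_kz',\, R_V^{-1}A^*y_k\rangle = 0$, so that
\begin{equation*}
\langle s-\widetilde s, z'\rangle = \langle u-u_r - R_V^{-1}A^*y_k,\; L^*z' - A^*Q_k z'\rangle \leq \| u-u_r - R_V^{-1}A^*y_k\|_V \,\| L^*z'-A^*Q_kz'\|_{V'}.
\end{equation*}
Since this holds for every $y_k\in W_k^Q$, I would take the minimum over $y_k$ in the first factor; in the second factor I would use that, by definition \eqref{eq:def_Qk_1}, $\|L^*z'-A^*Q_kz'\|_{V'}=\min_{\widetilde y_k\in W_k^Q}\|L^*z'-A^*\widetilde y_k\|_{V'}$. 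Dividing by $\|z'\|_{Z'}$ and taking the supremum over $z'\in Z'\setminus\{0\}$ then factors the bound into $\min_{y_k}\|u-u_r-R_V^{-1}A^*y_k\|_V$ times $\delta_{W_k^Q}^L$, which is precisely \eqref{eq:control_VI_PD_1}.

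The second bound \eqref{eq:control_VI_PD_2} follows by controlling this minimum: choosing $y_k=0$ gives $\min_{y_k\in W_k^Q}\|u-u_r-R_V^{-1}A^*y_k\|_V \le \|u-u_r\|_V$, and the quasi-optimality estimate \eqref{eq:control_PG} of Proposition~\ref{prop:1} then replaces $\|u-u_r\|_V$ by $(1-(\delta_{V_r,W_r})^2)^{-1/2}\min_{v\in V_r}\|u-v\|_V$. Substituting into \eqref{eq:control_VI_PD_1} yields \eqref{eq:control_VI_PD_2}.

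The only genuinely non-routine point is recognizing that the orthogonality relation \eqref{eq:def_Qk_2} lets one replace $u-u_r$ by $u-u_r-R_V^{-1}A^*y_k$ without loss; everything else reduces to the duality bound $|\langle v,w\rangle|\le\|v\|_V\|w\|_{V'}$ together with the definitions of $Q_k$ and $\delta_{W_k^Q}^L$. I would expect no real obstacle beyond keeping the dual pairings in $V$--$V'$ consistent and handling the order of the $\min$ over $y_k$ and the $\sup$ over $z'$ carefully.
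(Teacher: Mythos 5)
Your proposal is correct and follows essentially the same route as the paper's proof: start from the identity $\langle s-\widetilde s,z'\rangle=\langle u-u_r,(L^*-A^*Q_k)z'\rangle$, use the Euler equation \eqref{eq:def_Qk_2} to subtract $R_V^{-1}A^*y_k$ at no cost, apply the duality bound together with the optimality \eqref{eq:def_Qk_1} of $Q_k$, and then take $y_k=0$ combined with \eqref{eq:control_PG} for the second estimate. The only (immaterial) difference is bookkeeping: the paper takes the supremum over $z'$ before minimizing over $y_k$, while you minimize over $y_k$ first, which is harmless since that factor is independent of $z'$.
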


\begin{proof}
 For any $z'\in Z'$, and for any $y_k\in W_k^Q$ we have
 \begin{align}
  \langle s-\widetilde s, z' \rangle &\overset{\eqref{eq:def_st_uQ}}{=} \langle u-u_r,(L^*- A^* Q_k) z' \rangle \nonumber\\
  &\overset{\eqref{eq:def_Qk_2}}{=}\langle u-u_r - R_V^{-1}A^* y_k,(L^*- A^* Q_k) z' \rangle \nonumber\\
  &\leq \| u-u_r - R_V^{-1}A^* y_k\|_V \| (L^*- A^*Q_k) z' \|_{V'} \label{eq:tmp6835}.
 \end{align}
From \eqref{eq:def_Qk_1}, we have
 \begin{equation*}
  \| (L^*- A^*Q_k) z' \|_{V'} = \min_{y_k\in W_k^Q} \| L^*z' - A^* y_k \|_{V'}.
 \end{equation*}
 Dividing by $\| z'\|_{Z'}$ and taking the supremum over $z'\in Z'\setminus \{0\}$ in \eqref{eq:tmp6835}, we obtain
 \begin{equation*}
   \| s-\widetilde s \|_Z  \leq  \delta_{W_k^Q}^L \| u-u_r - R_V^{-1}A^* y_k\|_V
 \end{equation*}
 Then, taking the minimum over $y_k\in W_k^Q$, we obtain \eqref{eq:control_VI_PD_1}. Finally, taking $y_k=0$ in \eqref{eq:control_VI_PD_1}, we obtain \eqref{eq:control_VI_PD_2} from \eqref{eq:control_PG}.
\end{proof}

{
\begin{remark}
Observing that $\delta_{W_k^Q}^L  \le  \| L^* - A^* Q_k \|_{V'}$ and $\min_{y_k\in W_k^Q} \| u-u_r - R_V^{-1}A^* y_k \|_{V} \le \| u-u_r\|_V$ , Proposition \ref{prop:NewBounds} provides a sharper bound of the error on the variable of interest by taking advantage of the orthogonality property  \eqref{eq:def_Qk_2}.
\end{remark}
}
\subsection{Projection based on a saddle point problem}\label{sec:SaddlePoint}

In this section we extend the method proposed in \cite{Dahmen2013} for the approximation of (vector-valued) variables of interest. 
The idea is to define the projection of $u$ on the reduced space $V_r$ by means of a saddle point problem. We first define and analyze this saddle point problem. Then we use  the solution of this problem for the estimation of the variable of interest. 
"
Let us equip $W$ with a norm $\|\cdot\|_W$ such that the relation $\| y \|_{W} = \| A^*y\|_{V'}$ holds for any $y\in W$, which is equivalent to the following relation between the Riesz maps $R_W$ and $R_V$:
\begin{equation}\label{eq:idealrieszmap}
R_W = AR_V^{-1}A^*.
\end{equation} 
The orthogonal projection $u_r^\perp$ of $u$ on $V_r$ satisfies
\begin{align*}
 \| u-u_r^\perp \|_V &= \min_{v\in V_r} \| u-v \|_V = \min_{v\in V_r} \sup_{0\neq w\in V} \frac{ \langle u-v,R_V w\rangle }{\| w \|_V} \\
 &= \min_{v\in V_r} \max_{0\neq w\in V} \frac{ \langle Av-b,A^{-*}R_V w\rangle }{\| w \|_V} 
  = \min_{v\in V_r} \max_{0\neq y\in W} \frac{ \langle Av-b,y\rangle }{\| R_V^{-1}A^*y \|_{V}}\\
 &= \min_{v\in V_r} \max_{0\neq y\in W} \frac{ \langle Av-b,y\rangle }{\| y \|_W}.
\end{align*}
Starting from this observation, we introduce a subspace $T_p \subset W$ of dimension $p$ and we define the projection $u_{r,p}$ in $V_r$ as the solution 
of the saddle point problem
\begin{equation}
  \min_{v\in V_r} \max_{\substack{w\in T_p \\ \Vert w\Vert_W=1}} \langle Av-b,w\rangle.\label{eq:def_saddle}
\end{equation}
In the following proposition, we prove the well-posedness of  \eqref{eq:def_saddle} under the condition (discrete inf-sup condition)
\begin{equation} \label{eq:infsup_discrete}
 {\inf_{0 \neq v \in V_r} \sup_{0 \neq  y \in T_p} } \dfrac{\langle A v, y\rangle}{\|v\|_{V} \|y\|_{W}} =:   \alpha_{V_r,T_p}  >0,
 \end{equation}
and we provide a practical characterization of $u_{r,p}$. 

\begin{proposition}\label{prop:saddle_solution}
Under assumption \eqref{eq:infsup_discrete}, there exists a unique solution $(u_{r,p},y_{r,p})$ in $V_r\times T_p$ to 
\begin{subequations}\label{eq:PGD}
 \begin{align}
  \langle R_W y_{r,p} , y\rangle + \langle Au_{r,p}, y\rangle&= \langle b, y\rangle  \quad \forall y\in T_p, \label{eq:PGD_1}\\
  \langle A^* y_{r,p}   ,v\rangle &=0 \quad\quad~~ \forall v\in V_r,\label{eq:PGD_2}
 \end{align}
 \end{subequations}
and $(u_{r,p},\frac{y_{r,p}}{\Vert y_{r,p}\Vert_W})$ is the unique solution of \eqref{eq:def_saddle}.
\end{proposition}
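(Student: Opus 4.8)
The plan is to treat the two assertions separately: first the unique solvability of the linear system \eqref{eq:PGD}, then its equivalence with the constrained min--max problem \eqref{eq:def_saddle}. The system \eqref{eq:PGD} is a square linear system in the finite-dimensional unknown $(u_{r,p},y_{r,p})\in V_r\times T_p$, so it is enough to establish injectivity. I would take the homogeneous problem (i.e. $b=0$) and test \eqref{eq:PGD_1} with $y=y_{r,p}$, which gives $\|y_{r,p}\|_W^2+\langle Au_{r,p},y_{r,p}\rangle=0$; since \eqref{eq:PGD_2} forces $\langle Au_{r,p},y_{r,p}\rangle=\langle A^*y_{r,p},u_{r,p}\rangle=0$, I conclude $y_{r,p}=0$. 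Substituting back into \eqref{eq:PGD_1} yields $\langle Au_{r,p},y\rangle=0$ for all $y\in T_p$, and the inf--sup condition \eqref{eq:infsup_discrete} then gives $\alpha_{V_r,T_p}\|u_{r,p}\|_V\le\sup_{0\neq y\in T_p}\langle Au_{r,p},y\rangle/\|y\|_W=0$, hence $u_{r,p}=0$. This is precisely the Brezzi structure: the $(1,1)$ block $\langle R_W\cdot,\cdot\rangle$ is the $W$-inner product (coercive on all of $T_p$) and the off-diagonal form $\langle Av,y\rangle$ satisfies the inf--sup condition, so one may alternatively invoke standard saddle-point theory. Existence and uniqueness for an arbitrary right-hand side follow from injectivity of the square system.

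To identify the saddle point, I would first eliminate the dual variable: for fixed $v$, the inner maximization $\max_{w\in T_p,\|w\|_W=1}\langle Av-b,w\rangle$ is the maximization of a linear functional over the unit sphere of $(T_p,\|\cdot\|_W)$, whose value is the dual norm of the restriction of $Av-b$ to $T_p$ and whose maximizer is the normalized Riesz representative in $T_p$ of that functional. I would then verify the two defining saddle inequalities for the candidate pair $(u_{r,p},\hat w)$ with $\hat w=y_{r,p}/\|y_{r,p}\|_W$. Condition \eqref{eq:PGD_2} shows $\langle Av,\hat w\rangle=0$ for every $v\in V_r$, so the outer objective $v\mapsto\langle Av-b,\hat w\rangle=-\langle b,\hat w\rangle$ is constant over $V_r$ and $u_{r,p}$ is (trivially) a minimizer; condition \eqref{eq:PGD_1} identifies, through Cauchy--Schwarz on $(T_p,\|\cdot\|_W)$, the (signed) normalization of $y_{r,p}$ as the inner maximizer at $v=u_{r,p}$, since \eqref{eq:PGD_1} expresses the residual as $\langle Au_{r,p}-b,y\rangle=-\langle R_Wy_{r,p},y\rangle$ for all $y\in T_p$. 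Combining the two inequalities gives the claimed solution of \eqref{eq:def_saddle}; uniqueness of the $V_r$-component is inherited from the well-posedness above, and uniqueness of the normalized dual variable follows from the fact that a nonzero linear functional attains its maximum over the unit sphere at a single point.

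The main obstacle I anticipate is the second half --- reconciling the bilinear, unnormalized system \eqref{eq:PGD} with the normalized constrained problem \eqref{eq:def_saddle}. One must track the scaling and sign of $y_{r,p}$ carefully, because the inner maximizer is the unit vector aligned with the Riesz representative of the residual $Au_{r,p}-b$, which \eqref{eq:PGD_1} ties to $y_{r,p}$ up to normalization. One must also address the degenerate case $y_{r,p}=0$, corresponding to a residual that vanishes on $T_p$: there the inner maximum is zero, every admissible $w$ attains it, and the normalization $y_{r,p}/\|y_{r,p}\|_W$ is immaterial. By contrast, the well-posedness part is comparatively routine once the coercivity of the $(1,1)$ block and the inf--sup condition \eqref{eq:infsup_discrete} are in hand.
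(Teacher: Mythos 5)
Your proof is correct in substance, but it takes a genuinely different route from the paper's, so a comparison is worthwhile. The paper handles both assertions at once by citing Theorem 2.34 of Ern--Guermond: coercivity of the block $\langle R_W\,\cdot,\cdot\rangle$ together with the inf-sup condition \eqref{eq:infsup_discrete} gives well-posedness of \eqref{eq:PGD} and identifies it as the optimality system of the quadratic saddle point problem $\min_{v\in V_r}\max_{y\in T_p}-\tfrac12\langle R_Wy,y\rangle+\langle b,y\rangle-\langle Av,y\rangle$; the link with \eqref{eq:def_saddle} is then made by writing $y=\lambda w$ with $\Vert w\Vert_W=1$ and maximizing explicitly over $\lambda\in\mathbb{R}$, which yields $\min_{v}\max_{w}\tfrac12\langle Av-b,w\rangle^2$. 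Your replacement for the citation --- injectivity of the square finite-dimensional system, obtained by testing \eqref{eq:PGD_1} with $y_{r,p}$, killing the cross term via \eqref{eq:PGD_2}, and then invoking \eqref{eq:infsup_discrete} --- is correct and fully self-contained, which is a genuine gain. Your identification step (direct verification of the saddle-point inequalities via Riesz representation in $(T_p,\Vert\cdot\Vert_W)$) is also sound, and it exposes two points that the paper's squaring argument hides: first, the sign --- since \eqref{eq:PGD_1} gives $\langle Au_{r,p}-b,y\rangle=-\langle R_Wy_{r,p},y\rangle$ on $T_p$, the maximizer of $w\mapsto\langle Au_{r,p}-b,w\rangle$ on the unit sphere is $-y_{r,p}/\Vert y_{r,p}\Vert_W$, so the proposition as stated holds only up to this sign (equivalently, with $\langle b-Av,w\rangle$ in \eqref{eq:def_saddle}); the paper's proof never sees this because squaring makes both signs maximizers --- and second, the degenerate case $y_{r,p}=0$, where the normalization is undefined and the inner maximizer is not unique. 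The one soft spot in your argument is the uniqueness of the $V_r$-component: saying it is ``inherited from the well-posedness above'' presumes the converse implication, namely that \emph{any} minimizer of $v\mapsto\max_{\Vert w\Vert_W=1}\langle Av-b,w\rangle$ solves \eqref{eq:PGD}. This is true, but needs a line of proof: either differentiate the squared residual dual norm (whose Euler equation at a minimizer is exactly \eqref{eq:PGD_2} for the Riesz representative defined by \eqref{eq:PGD_1}), or argue by strict convexity of the Hilbertian dual norm combined with injectivity of $v\mapsto\langle Av,\cdot\rangle$ restricted to $T_p$, which follows from \eqref{eq:infsup_discrete}. With that line added, your proof is complete.
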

\begin{proof}
Since the Riesz map $R_W$ defined by \eqref{eq:idealrieszmap} is coercive and under the discrete inf-sup condition \eqref{eq:infsup_discrete} on operator $A$, Theorem 2.34 of \cite{Ern} gives that \eqref{eq:PGD} is a well-posed problem whose solution $(u_{r,p},y_{r,p})$ is the unique solution of the saddle-point problem 
\begin{align*}
\min_{v\in V_r} \max_{y\in T_p} -\frac{1}{2} \langle R_W y,y\rangle + \langle b , y \rangle -  \langle Av,y\rangle .
\end{align*}
Denoting $y=\lambda w$ with $\Vert w \Vert_W=1$, this saddle point problem is equivalent to 
\begin{align*}
\min_{v\in V_r} \max_{\substack{w\in T_p \\ \Vert w \Vert_W =1}} \max_{\lambda\in \mathbb{R}} - \frac{1}{2} \lambda^2 + \lambda  \langle b - Av,w\rangle =
\min_{v\in V_r} \max_{\substack{w\in T_p \\ \Vert w\Vert_W =1}}  \frac{1}{2} \langle Av - b,w\rangle^2,
\end{align*}
which coincides with problem \eqref{eq:def_saddle}. 
\end{proof}
~\\

The following proposition provides a quasi-optimality result for the projection $u_{r,p} \in V_r$ of $u$ onto $V_r$.

\begin{proposition}\label{prop:bounds_saddle_point}
Under assumption \eqref{eq:infsup_discrete}, the solution $u_{r,p}$ of \eqref{eq:PGD} is such that 
 \begin{equation}
  \| u-u_{r,p} \|_V \leq \frac{1}{\sqrt{1-\delta_{V_r,T_p}^2}}\min_{v\in V_r} \| u-v \|_V, \label{eq:control_PGD}
 \end{equation}
 where 
  \begin{equation}\label{eq:delta_VrWp}
  \delta_{V_r,T_p} = \max_{0\neq v\in V_r} \min_{y\in T_p} \frac{\|  v -R_V^{-1}A^*y \|_{V}}{\| v \|_V} 
 \end{equation}
 is such that 
 \begin{equation}\label{deltarp_alpharp}
  \delta_{V_r,T_p}^2 = 1 - \alpha_{V_r,T_p}^2 <1.
   \end{equation}

\end{proposition}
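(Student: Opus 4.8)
The plan is to mirror the proof of Proposition~\ref{prop:1}, comparing $u_{r,p}$ with the $V$-orthogonal projection $u_r^\perp$ of $u$ onto $V_r$ and concluding through the Pythagorean identity
\[
\|u-u_{r,p}\|_V^2 = \|u-u_r^\perp\|_V^2 + \|u_r^\perp-u_{r,p}\|_V^2,
\]
which holds because $u-u_r^\perp$ is $V$-orthogonal to $V_r\ni u_r^\perp-u_{r,p}$. Throughout I write $\langle a,b\rangle_V := \langle R_V a,b\rangle$ for the $V$-inner product. The one genuinely new feature compared with the Petrov--Galerkin case is that the saddle-point residual does not vanish on $T_p$: equation~\eqref{eq:PGD_1} only gives $\langle b-Au_{r,p},y\rangle = \langle R_W y_{r,p},y\rangle$ for $y\in T_p$. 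The decisive preliminary step is therefore to tame this residual through the auxiliary variable. Setting $g_{r,p} := R_V^{-1}A^*y_{r,p}$ and $U_p := R_V^{-1}A^*T_p\subset V$, I would use \eqref{eq:idealrieszmap} to rewrite $\langle R_W y_{r,p},y\rangle = \langle g_{r,p}, R_V^{-1}A^*y\rangle_V$, and then read off two facts: \eqref{eq:PGD_2} says $g_{r,p}$ is $V$-orthogonal to $V_r$, while the previous identity says $u-u_{r,p}-g_{r,p}$ is $V$-orthogonal to $U_p$. Since $g_{r,p}\in U_p$, the latter means $g_{r,p}$ is exactly the $V$-orthogonal projection of $u-u_{r,p}$ onto $U_p$, so in particular $\|u-u_{r,p}-g_{r,p}\|_V\le\|u-u_{r,p}\|_V$.

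With $d:=u_r^\perp-u_{r,p}\in V_r$ and $e:=u-u_{r,p}$, I would first use the orthogonality \eqref{eq:def_orth_proj} to write $\|d\|_V^2 = \langle u-u_{r,p},R_V d\rangle$, and then insert $A^*y$ for an arbitrary $y\in T_p$:
\[
\langle u-u_{r,p},R_V d\rangle = \langle u-u_{r,p}, R_V d - A^*y\rangle + \langle b-Au_{r,p},y\rangle.
\]
The first term equals $\langle e, d-R_V^{-1}A^*y\rangle_V$, and the second equals $\langle g_{r,p}, R_V^{-1}A^*y\rangle_V = -\langle g_{r,p}, d-R_V^{-1}A^*y\rangle_V$, the last equality because $g_{r,p}$ is $V$-orthogonal to $V_r\ni d$. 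Hence $\|d\|_V^2 = \langle e-g_{r,p}, d-R_V^{-1}A^*y\rangle_V$ for every $y\in T_p$. Applying Cauchy--Schwarz, minimising over $y\in T_p$, invoking the definition \eqref{eq:delta_VrWp} of $\delta_{V_r,T_p}$ for $d\in V_r$, and using $\|e-g_{r,p}\|_V\le\|e\|_V$ then yields $\|d\|_V\le\delta_{V_r,T_p}\|u-u_{r,p}\|_V$. Substituted into the Pythagorean identity, this gives $(1-\delta_{V_r,T_p}^2)\|u-u_{r,p}\|_V^2\le\min_{v\in V_r}\|u-v\|_V^2$, which is \eqref{eq:control_PGD} once $\delta_{V_r,T_p}<1$ is known.

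To obtain \eqref{deltarp_alpharp} I would repeat the scalar-optimisation computation of \eqref{deltar_relation1}, namely minimising $\|v-\lambda R_V^{-1}A^*y\|_V$ over $\lambda\in\mathbb{R}$, which gives
\[
\delta_{V_r,T_p}^2 = 1 - \min_{0\neq v\in V_r}\max_{0\neq y\in T_p}\frac{\langle Av,y\rangle^2}{\|v\|_V^2\,\|A^*y\|_{V'}^2}.
\]
The defining relation $\|y\|_W=\|A^*y\|_{V'}$ encoded in \eqref{eq:idealrieszmap} turns the fraction into $\langle Av,y\rangle^2/(\|v\|_V^2\|y\|_W^2)$, so the minimax term is precisely $\alpha_{V_r,T_p}^2$ by \eqref{eq:infsup_discrete}; thus $\delta_{V_r,T_p}^2=1-\alpha_{V_r,T_p}^2$, and $\delta_{V_r,T_p}<1$ follows from $\alpha_{V_r,T_p}>0$.

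I expect the main obstacle to be the non-vanishing residual $\langle b-Au_{r,p},y\rangle$, which is absent in the Petrov--Galerkin setting of Proposition~\ref{prop:1}. The crux is to recognise, via \eqref{eq:idealrieszmap}, that this residual is a $V$-inner product against $g_{r,p}\in U_p$, and that the two saddle-point equations \eqref{eq:PGD_1}--\eqref{eq:PGD_2} make $g_{r,p}$ simultaneously $V$-orthogonal to $V_r$ and equal to the $V$-projection of $u-u_{r,p}$ onto $U_p$. These are exactly the two properties needed to absorb the residual into the quasi-optimality estimate and to control $\|e-g_{r,p}\|_V$; once they are in hand, the remaining steps are the same Pythagorean and scalar-minimisation bookkeeping already used for \eqref{eq:def_delta_r} and \eqref{eq:control_PG}.
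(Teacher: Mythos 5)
Your proposal is correct and follows essentially the same route as the paper's proof: the same comparison of $u_{r,p}$ with $u_r^\perp$, the same chain combining \eqref{eq:PGD_1}, \eqref{eq:PGD_2} and \eqref{eq:idealrieszmap} to get $\|u_r^\perp-u_{r,p}\|_V\le\delta_{V_r,T_p}\|u-u_{r,p}\|_V$, the same Pythagorean conclusion, and the same reuse of \eqref{deltar_relation1} for \eqref{deltarp_alpharp}. The only (cosmetic) difference is that you justify $\|u-u_{r,p}-R_V^{-1}A^*y_{r,p}\|_V\le\|u-u_{r,p}\|_V$ by viewing $R_V^{-1}A^*y_{r,p}$ as the $V$-orthogonal projection of $u-u_{r,p}$ onto $R_V^{-1}A^*T_p$, whereas the paper views $y_{r,p}$ as the $W$-orthogonal projection of $R_W^{-1}(b-Au_{r,p})$ onto $T_p$ --- isometrically equivalent statements under \eqref{eq:idealrieszmap}.
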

 
 \begin{proof} 
Let $(u_{r,p},y_{r,p}) $ be the solution of \eqref{eq:PGD}.
 For any $v\in V_r$ and $y\in T_p$, we have
 \begin{align}
  \langle u_r^\perp-u_{r,p},R_V v \rangle &\overset{\eqref{eq:def_orth_proj}}{=} \langle u-u_{r,p},R_V v \rangle = \langle b-Au_{r,p},A^{-*}R_V v \rangle \nonumber \\
  &\overset{\eqref{eq:PGD_1}}{=} \langle b-Au_{r,p},A^{-*}R_V v -y \rangle + \langle R_Wy_{r,p},y \rangle \nonumber \\
  &\overset{\eqref{eq:PGD_2}}{=} \langle b-Au_{r,p},A^{-*}R_V v -y \rangle - \langle R_W y_{r,p},A^{-*}R_Vv-y \rangle \nonumber \\
  &\overset{\eqref{eq:idealrieszmap}}{=} \langle u-u_{r,p} - R_V^{-1}A^* y_{r,p},R_V v -A^*y \rangle \nonumber \\
  &\leq \| u-u_{r,p} - R_V^{-1}A^* y_{r,p} \|_V \| R_V v -A^*y \|_{V'}. \label{eq:tmp2}
 \end{align}
Equation \eqref{eq:PGD_1} implies that 
$$
y_{r,p} = \arg\min_{y\in W} \Vert R_W^{-1}( b- A u_{r,p} ) - y \Vert_W,$$ so that  $\| R_W^{-1}(b-Au_{r,p}) - y_{r,p}\|_{W} \leq \| R_W^{-1}(b-Au_{r,p}) \|_{W}$.  Using \eqref{eq:idealrieszmap}, it follows
 \begin{equation}\label{eq:tmp5654}
  \| u-u_{r,p} -  R_V^{-1}A^* y_{r,p} \|_{V} \leq \| u-u_{r,p} \|_{V},
 \end{equation}
 Using \eqref{eq:tmp5654} in \eqref{eq:tmp2}, taking the infimum over $y\in T_p$, dividing by $\| v \|_V$ and taking the supremum over $v\in V_r\setminus \{0\}$, we obtain
 \begin{equation*}
  \| u_r^\perp-u_{r,p} \|_V \leq \delta_{V_r,T_p} \| u-u_{r,p}\|.
 \end{equation*}
From  \eqref{deltar_relation1} (with $W_r$ replaced by $T_p$) and  \eqref{eq:idealrieszmap}
(which implies $\| A^*y \|_{V'} = \| y \|_{W}$), we obtain \eqref{deltarp_alpharp}.
Then, from the definition of $u_r^\perp$, we have 
 \begin{equation*}
 \| u-u_{r,p}\|^2 = \| u-u_{r}^*\|^2 + \| u_{r}^* - u_{r,p}\|^2 \le \| u-u_{r}^*\|^2 +  \delta_{V_r,T_p}^2\| u-u_{r,p}\|^2,
  \end{equation*}
from which we deduce \eqref{eq:control_PGD}.
\end{proof}
~\\
From the definition \eqref{eq:delta_VrWp} of $\delta_{V_r ,T_p}$, we easily deduce the following corollary.
\begin{corollary}\label{cor:IdealTestSpace} ~If ~$T_p$ is~ such that~ $R_W^{-1}AV_r \subset T_p$ (or equivalently $V_r \subset R_V^{-1} A^* T_p$) then $\delta_{V_r ,T_p} =0$ and $u_{r,p}$ coincides with the best approximation $u_r^\perp$ of $u$ in $V_r$.
\end{corollary}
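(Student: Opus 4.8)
The plan is to read $\delta_{V_r,T_p}=0$ directly off its definition \eqref{eq:delta_VrWp} and then feed this value into the quasi-optimality bound \eqref{eq:control_PGD} of Proposition \ref{prop:bounds_saddle_point}. As a preliminary step I would record that the two stated conditions are genuinely equivalent. Since $A$ is a norm-isomorphism, $A^*$ is invertible, and relation \eqref{eq:idealrieszmap} reads $R_W=AR_V^{-1}A^*$, so that $R_W^{-1}=A^{-*}R_V A^{-1}$ and hence $R_W^{-1}A=A^{-*}R_V$. In particular $R_V^{-1}A^*R_W^{-1}A=R_V^{-1}A^*A^{-*}R_V=\mathrm{Id}_V$. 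Thus every $v\in V$ satisfies $v=R_V^{-1}A^*y$ with $y=R_W^{-1}Av\in W$. This gives both implications at once: if $R_W^{-1}AV_r\subset T_p$ then each $v\in V_r$ equals $R_V^{-1}A^*y$ with $y=R_W^{-1}Av\in T_p$, so $V_r\subset R_V^{-1}A^*T_p$; conversely if $v=R_V^{-1}A^*y$ for some $y\in T_p$, then $R_W^{-1}Av=R_W^{-1}AR_V^{-1}A^*y=R_W^{-1}R_Wy=y\in T_p$, whence $R_W^{-1}AV_r\subset T_p$.

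Next, working from the condition $V_r\subset R_V^{-1}A^*T_p$, for any $v\in V_r$ there exists $y\in T_p$ with $v=R_V^{-1}A^*y$, so the inner minimum $\min_{y\in T_p}\|v-R_V^{-1}A^*y\|_V$ vanishes. Taking the maximum over $v\in V_r\setminus\{0\}$ in \eqref{eq:delta_VrWp} yields $\delta_{V_r,T_p}=0$, which is the first assertion of the corollary.

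Finally I would substitute $\delta_{V_r,T_p}=0$ into the bound \eqref{eq:control_PGD}, obtaining $\|u-u_{r,p}\|_V\le\min_{v\in V_r}\|u-v\|_V=\|u-u_r^\perp\|_V$. Since $u_{r,p}\in V_r$ and the orthogonal projection $u_r^\perp$ is the unique minimizer of $\|u-v\|_V$ over the finite-dimensional (hence closed) subspace $V_r$, this forces $u_{r,p}=u_r^\perp$. I do not anticipate any real obstacle here: the only step demanding a little care is the verification of the equivalence of the two conditions, and that rests entirely on the invertibility of $A$ together with the identity $R_W=AR_V^{-1}A^*$; the remainder is an immediate reading of the relevant definitions and the already-established estimate, which is exactly what the phrase \emph{we easily deduce} signals.
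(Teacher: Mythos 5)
Your proposal is correct and follows exactly the route the paper intends: the paper gives no explicit proof, stating only that the corollary is "easily deduced" from the definition \eqref{eq:delta_VrWp}, and your argument — reading $\delta_{V_r,T_p}=0$ off the definition and substituting into \eqref{eq:control_PGD}, using uniqueness of the orthogonal projection — is precisely that deduction, with the added (welcome) verification of the equivalence of the two stated conditions via $R_W=AR_V^{-1}A^*$. Note only that invoking Proposition \ref{prop:bounds_saddle_point} requires \eqref{eq:infsup_discrete}, which is automatic here since \eqref{deltarp_alpharp} and $\delta_{V_r,T_p}=0$ give $\alpha_{V_r,T_p}=1>0$.
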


\begin{remark}\label{rmk:weakcoercive}
Note that \eqref{eq:control_PGD} and \eqref{deltarp_alpharp} give 
 \begin{equation*}
  \| u - u_{r,p}  \|_V \leq \frac{1}{\alpha_{V_r,T_p}} ~\min_{v\in V_r}\| u-v \|_{V},
 \end{equation*}
 which is sharper than the classical error bound obtained by the Cea's lemma 
 \begin{equation*}  
  \| u - u_{r,p}  \|_V \leq (1+\frac{1}{\alpha_{V_r,T_p}}) ~\min_{v\in V_r}\| u-v \|_{V}.
 \end{equation*}
\end{remark}

Now, we consider the approximation $\widetilde s$ of $s$ defined by 
\begin{equation}\label{eq:def_st_uQrp}
 \widetilde s = Lu_{r,p} + LR_V^{-1}A^* y_{r,p},
\end{equation}
where $(u_{r,p},y_{r,p})\in V_r\times T_p$ is the solution of the saddle point problem \eqref{eq:PGD}. The following proposition provides an error bound for the approximation of the variable of interest.\\

\begin{proposition}\label{prop:bounds_QoI_saddle_point}
The approximation $\widetilde s$ defined by \eqref {eq:def_st_uQrp} satisfies
 \begin{equation}
  \| s-\widetilde s \|_Z \leq \delta_{T_p}^L \| u-u_{r,p} -  R_V^{-1}A^* y_{r,p} \|_V, \label{eq:control_VI_PGD_1} 
 \end{equation}
 with
\begin{align}
  \delta_{T_p}^L = \sup_{0\neq z'\in Z'}\min_{y\in T_p}\frac{\| L^* z' -A^*y \|_{V'}}{\| z' \|_{Z'}},\label{defdeltaL}
\end{align}
and
 \begin{equation}
  \| s-\widetilde s \|_Z \leq \frac{\delta_{T_p}^L}{\sqrt{1-(\delta_{V_r,T_p})^2}} \min_{v\in V_r} \| u-v\|_V. \label{eq:control_VI_PGD_2}
 \end{equation}  

\end{proposition}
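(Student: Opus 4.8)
The plan is to mirror the duality argument used in Propositions \ref{prop:1} and \ref{prop:NewBounds}, the only new ingredient being the orthogonality that comes out of the saddle point equations \eqref{eq:PGD}. I would begin by setting $e = u - u_{r,p} - R_V^{-1}A^* y_{r,p} \in V$, which is exactly the quantity appearing on the right-hand side of \eqref{eq:control_VI_PGD_1}. Since $s = Lu$ and $\widetilde s = Lu_{r,p} + LR_V^{-1}A^* y_{r,p}$ by definition \eqref{eq:def_st_uQrp}, linearity of $L$ immediately gives $s - \widetilde s = Le$, so that for every $z'\in Z'$ one has $\langle s-\widetilde s, z'\rangle = \langle e, L^* z'\rangle$.

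The key (and essentially only nontrivial) step is to show that $e$ is orthogonal to $A^* T_p$, that is $\langle e, A^* y\rangle = 0$ for all $y\in T_p$. I would verify this by passing to the adjoint, $\langle e, A^*y\rangle = \langle Ae, y\rangle$, and using $Au=b$ to write $Ae = b - Au_{r,p} - AR_V^{-1}A^* y_{r,p}$. Invoking the identity \eqref{eq:idealrieszmap} to replace $AR_V^{-1}A^*y_{r,p}$ by $R_W y_{r,p}$, I get $\langle Ae,y\rangle = \langle b - Au_{r,p} - R_W y_{r,p}, y\rangle$, which vanishes for all $y\in T_p$ precisely because of the first saddle point equation \eqref{eq:PGD_1}. (Note that the second equation \eqref{eq:PGD_2} is not needed here; it is what guarantees quasi-optimality of $u_{r,p}$, used below only through Proposition \ref{prop:bounds_saddle_point}.)

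With this orthogonality in hand, for any $z'\in Z'$ and any $y\in T_p$ I write $\langle s-\widetilde s, z'\rangle = \langle e, L^* z' - A^* y\rangle \leq \|e\|_V \|L^* z' - A^* y\|_{V'}$ by the Cauchy--Schwarz inequality. Taking the minimum over $y\in T_p$, dividing by $\|z'\|_{Z'}$, and taking the supremum over $z'\in Z'\setminus\{0\}$ then yields \eqref{eq:control_VI_PGD_1} with $\delta_{T_p}^L$ given by \eqref{defdeltaL}.

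Finally, to derive \eqref{eq:control_VI_PGD_2} I would simply chain \eqref{eq:control_VI_PGD_1} with two estimates already available from the proof of Proposition \ref{prop:bounds_saddle_point}: the bound \eqref{eq:tmp5654}, which states $\|e\|_V = \|u - u_{r,p} - R_V^{-1}A^* y_{r,p}\|_V \leq \|u - u_{r,p}\|_V$, followed by the quasi-optimality estimate \eqref{eq:control_PGD}, namely $\|u-u_{r,p}\|_V \leq (1-\delta_{V_r,T_p}^2)^{-1/2}\min_{v\in V_r}\|u-v\|_V$. Combining the three inequalities gives \eqref{eq:control_VI_PGD_2} directly. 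I expect no real obstacle beyond spotting the orthogonality in the second paragraph; once that is identified, the argument is a routine reprise of the earlier propositions.
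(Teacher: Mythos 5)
Your proposal is correct and follows essentially the same route as the paper's proof: the identity $\langle s-\widetilde s, z'\rangle = \langle u-u_{r,p}-R_V^{-1}A^*y_{r,p},\, L^*z'-A^*y\rangle$ via \eqref{eq:PGD_1} and \eqref{eq:idealrieszmap}, then the duality bound, the optimization over $y\in T_p$ and $z'\in Z'$, and finally the chain \eqref{eq:control_VI_PGD_1}, \eqref{eq:tmp5654}, \eqref{eq:control_PGD} for the second estimate. The only difference is cosmetic: you spell out explicitly the orthogonality $\langle u-u_{r,p}-R_V^{-1}A^*y_{r,p}, A^*y\rangle = 0$ for $y\in T_p$ (and correctly note that \eqref{eq:PGD_2} plays no role in it), whereas the paper compresses this into a single annotated equality.
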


\begin{proof}
 For any $z'\in Z'$ and $y\in T_p$, we have
 \begin{align*}
  \langle s - \widetilde s, z' \rangle &\overset{\eqref{eq:def_st_uQrp}}{=} \langle u - u_{r,p}-R_V^{-1}A^* y_{r,p}, L^*  z'\rangle \\
  &\overset{\eqref{eq:PGD_1} \& \eqref{eq:idealrieszmap}}{=} \langle u - u_{r,p}-R_V^{-1}A^* y_{r,p}, L^* z' - A^* y\rangle \\
  &\leq \| u - u_{r,p}-R_V^{-1}A^* y_{r,p} \|_V \| L^* z' - A^* y \|_{V'}.
 \end{align*}
 Taking the minimum over $y\in T_p$, dividing by $\| z' \|_{Z'}$ and taking the supremum over $z'\in Z'\setminus\{0\}$, we obtain \eqref{eq:control_VI_PGD_1}. Finally, thanks to \eqref{eq:control_VI_PGD_1}, \eqref{eq:tmp5654} and \eqref{eq:control_PGD}, we obtain \eqref{eq:control_VI_PGD_2}.
\end{proof}

We observe that $T_p$ impacts both the quality of the projection of $u$ (via the constant $\delta_{V_r,T_p}$ in \eqref{eq:control_PGD}) and the quality of the approximation of the variable of interest (via constants $\delta_{V_r,T_p}$ and $\delta^L_{T_p}$ in \eqref{eq:control_VI_PGD_2}). Then, we will consider for $T_p$ spaces of the form
\begin{equation}\label{eq:def_Wp}
 T_p =  W_r + W^Q_k,
\end{equation}
with $\dim(W_r)=r$. This implies
\begin{equation*}
 \delta_{T_p}^L \leq \delta_{W^Q_k}^L \quad \text{and} \quad \delta_{V_r,T_p} \leq \delta_{V_r, W_r},
\end{equation*}
so that the error bound \eqref{eq:control_VI_PGD_2} for the variable of interest  is better than the error bound \eqref{eq:control_VI_PD_2} of the primal-dual method  with primal approximation space $V_r$, primal test space $ W_r$ and dual approximation space $W_k^{Q}$. Therefore, we expect the approximation $u_{r,p}$ to be closer to the solution $u$ than the Petrov-Galerkin projection $u_r$. Also, the approximation of the quantity of interest is expected to be improved.

\begin{remark}[Symmetric coercive case]\label{rmk:SPD_2}
 Let us consider the case where $A$ is symmetric and coercive, $R_V=R_W = A$ and $W_r=V_r$. The choice \eqref{eq:def_Wp} implies that  $V_r \subset T_p$, so that $T_p$ admits the orthogonal decomposition $T_p = V_r \oplus (T_p \cap V_r^\perp)  $. Equation \eqref{eq:PGD_2} implies that $y_{r,p} \in T_p \cap V_r^\perp$. Let  $t_{r,p}   =y_{r,p}+u_{r,p}\in T_p$.  Equation \eqref{eq:PGD_1} gives $\langle R_V t_{r,p} ,y\rangle = \langle R_V u , y\rangle$ for all $y\in T_p$, which implies that $t_{r,p}$ is the orthogonal projection of $u$ on $T_p$, where $u_{r,p}$ and $y_{r,p}$ are the orthogonal projections of $u$ on $V_r$ and $T_p \cap V_r^\perp$ respectively.  Furthermore, the approximation of the variable of interest \eqref{eq:def_st_uQrp} is given by $ \widetilde s = L t_{r,p}$. We conclude that in this particular setting, 
the saddle point approach can be simply interpreted as an orthogonal projection of $u$ on the enriched space $ T_p = V_r+W_k^Q$, followed by a standard estimation of the variable of interest. 
\end{remark}

\section{Goal-oriented projections for parameter-dependent equations}\label{sec:Application2MOR}

We now consider a parameter-dependent equation 
$ A(\xi)u(\xi)=b(\xi) $ where $\xi$ denotes a parameter taking values in a set $\Xi\subset\mathbb{R}^d$, $A(\xi) \in \mathcal{L}(V,W')$ and $b(\xi) \in W'$. The variable of interest is defined by $s(\xi) = L(\xi)u(\xi)$, with 
$L(\xi) \in \mathcal{L}(V,Z)$.\\

In Section \ref{sec:Analysis}, we have presented different projection methods for the estimation of the variable of interest which rely on the introduction of three spaces: the primal approximation space $V_r$, the primal test space $W_r$ and the dual approximation space $W_k^Q$. We recall that for the saddle point approach, we introduce the space $T_p=W_r + W_k^Q$. We adopt an \emph{offline/online strategy}. Reduced (low-dimensional) spaces $V_r$, $W_r$ and $W_k^Q$  are constructed during the \emph{offline phase}. Then, the projections on these reduced spaces and the evaluations of the variable of interest are rapidly computed for any parameter value $\xi\in\Xi$ during  the \emph{online phase}.\\

In Section \ref{sec:constructionWr}, we will first consider the construction of the test space $W_r$. 
For scalar-valued variables of interest, reduced spaces $V_r$ and $W_k^Q$ are classically defined as the span of snapshots of the primal and dual solutions $u(\xi)$ and $Q(\xi)$. These snapshots can be selected at random, using samples drawn according a certain probability measure over $\Xi$, see \emph{e.g.} \cite{Prudhomme2002}. Another popular method is to select the snapshots in a greedy way \cite{Cuong05,haasdonk2014reduced,Rozza2008}, with a uniform control of the error $\| s(\xi) - \widetilde s(\xi)\|_Z$ over $\Xi$. This method requires an estimation of the error on the variable of interest. In the same lines, we introduce error estimates for vector-valued variables of interest in Section \ref{sec:errorEstimate}, and we propose greedy algorithms for the construction of $V_r$ and $W_k^Q$ in Section \ref{sec:offline}.

\subsection{Construction of the test space $W_r$}\label{sec:constructionWr}

Assuming that the primal approximation space $V_r$ is given, we know from the previous section that $W_r$ should be chosen such that $\delta_{V_r,W_r}$ is as close to zero as possible (see Propositions \ref{prop:1}, \ref{prop:NewBounds}, \ref{prop:bounds_saddle_point} and \ref{prop:bounds_QoI_saddle_point}). 
In the literature, $W_r=V_r$ is a common choice (standard Galerkin projection). 
When the operator $A(\xi)$ is symmetric and coercive, we can choose $W_r=V_r$ which is the optimal test space with respect to the norm induced by $A(\xi)$ (see Remark \ref{rmk:SPD_1}).  However, this choice may lead to an inaccurate projection of the primal variable when the operator is ill-conditioned (i.e. 
$\frac{\beta}{\alpha} \gg1$).
 In the case of non coercive operators, a parameter-dependent test space is generally defined by $W_r=W_r(\xi)=R_W^{-1}A(\xi) V_r$, where $R_W^{-1}A(\xi)$ is called the ``supremizer operator'' (see e.g. {\cite{Rozza2007a,MPR02} }). This  approach is no more than a minimal residual method since the resulting Petrov-Galerkin projection defined by \eqref{eq:def_PG} is $u_r(\xi) = \arg \min_{v_r \in V_r}\| A(\xi)v_r - b(\xi)\|_{W'}$.
 In Section \ref{sec:PG}, we have seen that the Petrov-Galerkin projection with an ideal test space 
 \begin{equation}\label{eq:IdealTestSpace}
 W_r(\xi) = A(\xi)^{-*} R_V {V}_r
\end{equation}
coincides with the best approximation. Having a basis $v_1,\hdots,v_r$ of $V_r$, the computation of this ideal parameter-dependent test space 
 would require the computation of $A^{-*}(\xi) R_V v_i$ for all $1\le i\le r$ for each parameter's value $\xi$, which is unfeasible in practice. 
Up to our knowledge, the only attempt to construct quasi-optimal test spaces for non symmetric and weakly coercive operators can be found in \cite{Dahmen2013}, where the authors proposed a greedy algorithm for the construction of a (parameter independent) test space which ensures the quasi-optimality constant to be uniformly bounded by an arbitrarily small constant. Here, we adopt an alternative approach where the (parameter-dependent) test space is defined by 
\begin{equation}\label{eq:Wr_PrecondVr}
 {W}_r(\xi) = P_m(\xi)^* R_V {V}_r,
\end{equation}
where $P_m(\xi)$ is an interpolation of the inverse of $A(\xi)$ using $m$ interpolation points in the parameter set $\Xi$. In practice, when  $A(\xi)$ is a matrix, algorithms developed in \cite{zahm2015interpolation} can be used. This will be detailed later on. The underlying idea is to obtain a test space as close as possible to the ideal test space $A^{-*}(\xi)R_V {V}_r$ defined in \eqref{eq:IdealTestSpace}. For $m=0$, with $P_0(\xi)=R_V^{-1}$ by convention, we have 
$W_r=V_r$, which yields the standard Galerkin projection.

\subsection{Error estimates for vector-valued variables of interest}\label{sec:errorEstimate}

In this section, we propose practical error estimates for the variable of interest, first for the primal-dual approach and then for the saddle point method.

\subsubsection{Primal-dual approach} 
Given approximations $\widetilde u $ and $\widetilde Q$ of the primal solution $u$ and the dual solution  $Q$ respectively, 
a standard approach is to start from the error bound
\begin{equation*}
 \| s(\xi)-\widetilde s(\xi)\|_Z \leq \| u(\xi)-\widetilde u(\xi)\|_V \| L(\xi)^* - A(\xi)^* \widetilde Q(\xi) \|_{Z'\rightarrow V'},
\end{equation*}
which is provided by Proposition \ref{prop:ClassicalBounds}. This suggests to measure the norm of the residuals associated to the primal and dual variables. In practice, we distinguish two cases.\\

In the case where the operator $A(\xi)$ is symmetric and coercive, it is natural to choose the parameter-dependent norm $\|\cdot\|_V$ as the one induced by the operator, i.e. $R_V=R_V(\xi)=A(\xi)$. However, neither the primal error $\Vert  u(\xi) - \widetilde u(\xi) \Vert_V = \langle b(\xi)- A(\xi) \widetilde u(\xi)  , u(\xi)-\widetilde u(\xi) \rangle $ nor the dual residual norm $\| L(\xi)^* - A(\xi)^* \widetilde Q(\xi) \|_{Z'\rightarrow V'} = \sup_{\Vert z \Vert_Z=1} \Vert  Q(\xi) z -\widetilde Q(\xi) z \Vert_Z$ can be computed without computing  the primal and dual solutions $u(\xi)$ and $Q(\xi)$. The classical way to circumvent this issue is to introduce a parameter-independent norm $\|\cdot\|_{V_0}$, which is in general the ``natural'' norm associated to the space $V$, and to measure  residuals  with the associated dual norm $\Vert \cdot \Vert_{V_0'}$. Here we assume  that the operator $A(\xi)$ satisfies
\begin{equation}
 \alpha(\xi) \| v \|_{V_0} \leq \| A(\xi)v \|_{V_0'} 
 \label{eq:normchoice}
\end{equation}
for all $v \in V$, where $\alpha(\xi)>0$. By definition of the norm $\| \cdot\|_{V}$, we can write
\begin{equation*}
  \| v \|_{V}^2 = \langle A(\xi) v, v \rangle \leq \| A(\xi) v \|_{V_0'} \| v \|_{V_0} \leq \alpha(\xi)^{-1} \| A(\xi) v \|_{V_0'}^2 \quad\quad \forall v\in V.
\end{equation*}
Then we have $\| u(\xi) - \widetilde u(\xi) \|_{V} \leq \alpha(\xi)^{-1/2} \| A(\xi)\widetilde u(\xi) - b(\xi) \|_{V_0'}$. In the same way, we can prove that $\| L(\xi)^* - A(\xi)^* \widetilde Q(\xi) \|_{Z'\rightarrow V'} \leq \alpha(\xi)^{-1/2} \| L(\xi)^* - A(\xi)^* \widetilde Q(\xi) \|_{Z'\rightarrow V_0'}$. Finally, we obtain
\begin{equation}\label{eq:standard_error_estimate}
  \| s(\xi) - \widetilde s(\xi) \|_Z \leq  \frac{\| A(\xi)\widetilde u(\xi) - b(\xi) \|_{V_0'} \| L(\xi)^* - A(\xi)^* \widetilde Q(\xi) \|_{Z'\rightarrow V_0'} }{ \alpha(\xi)} := \Delta(\xi) ,
\end{equation}
where $\Delta(\xi)$ is a certified error bound for the variable of interest, which involves computable primal and dual residual norms. \\

In the general case, we consider for $\| \cdot \|_V$ the natural norm on $V$, i.e. 
 $\|\cdot\|_{V}=\|\cdot\|_{V_0}$. As a consequence, the norm of the dual residual is computable, but the computation of the error $\| u(\xi) - \widetilde u(\xi) \|_{V_0}$ requires the primal solution $u(\xi)$ which is not available in practice. Once again, we assume that the operator satisfies the property \eqref{eq:normchoice} so that we can write $\| u(\xi)-\widetilde u(\xi)\|_{V_0} \leq \alpha(\xi)^{-1}\| A(\xi)\widetilde u(\xi) - b(\xi)\|_{V_0'}$. Then we  end up with the same error bound \eqref{eq:standard_error_estimate} for the variable of interest.

\subsubsection{Saddle point method} 
We now derive new error bounds in the case where the approximation $\widetilde s(\xi)$ is obtained by the saddle point method introduced in Section \ref{sec:SaddlePoint}. Let us start from the error bound
\begin{align}
 \| &s(\xi)-\widetilde s(\xi) \|_Z \nonumber \\
 &\leq \sup_{0\neq z'\in Z'}\min_{y\in T_p}\frac{\| L(\xi)^* z' -A(\xi)^*y \|_{V'}}{\| z' \|_{Z'}} \| u(\xi)-u_{r,p}(\xi) -  R_V^{-1}A(\xi)^* y_{r,p}(\xi) \|_V  \label{eq:tmp5731}
\end{align}
provided by Proposition \ref{prop:bounds_QoI_saddle_point}. Once again, we distinguish two cases.\\

For the case where the operator $A(\xi)$ is symmetric and coercive, we consider for $\|\cdot\|_V$ the norm induced by the operator, i.e. $R_V=A$. According to Remark \ref{rmk:SPD_2}, the quantity $t_{r,p}(\xi)=u_{r,p}(\xi) -  R_V^{-1}(\xi)A(\xi)^* y_{r,p}(\xi)=u_{r,p}(\xi)+y_{r,p}(\xi)$ is nothing but the orthogonal projection of $u(\xi)$ onto $T_p = W_r + W_k^Q$, with $W_r=V_r$. Then for any $\widetilde t_{r,p}\in T_p$ we have 
 \begin{equation*}
  \| u(\xi) - t_{r,p}(\xi) \|_{V}^2 \leq \| u(\xi) - \widetilde t_{r,p} \|_{V}^2 \leq \alpha(\xi)^{-1} \| b(\xi) - A(\xi) \widetilde t_{r,p}  \|_{V_0'}^2,
 \end{equation*}
 where the norm $\|\cdot\|_{V_0}$ is the natural norm on $V$ such that \eqref{eq:normchoice} holds. Then, taking the infimum over $\widetilde t_{r,p}\in T_p$ we obtain 
 \begin{equation*}
  \| u(\xi) - t_{r,p}(\xi) \|_{V}\leq \alpha(\xi)^{-1/2}\inf_{\widetilde t_{r,p}\in T_p}\| A(\xi) \widetilde t_{r,p} - b(\xi) \|_{V_0'}.
 \end{equation*}
 Finally, we obtain that 
 \begin{align}
  \| &s(\xi) - \widetilde s(\xi) \|_Z \nonumber \\
  &\leq \frac{1}{\alpha(\xi)}\sup_{0\neq z'\in Z'}\min_{y\in T_p}\frac{\| L(\xi)^* z' -A(\xi)^*y \|_{V_0'}}{\| z' \|_{Z'}} \min_{\widetilde t_{r,p}\in T_p}\| A(\xi) \widetilde t_{r,p} - b(\xi) \|_{V_0'} := \Delta(\xi). \label{eq:error_estimate_SPD_SaddlePoint}
 \end{align}
Note that the main difference between this error estimate and the previous one \eqref{eq:standard_error_estimate} is the minimization problem over $T_p$ in both primal and dual residuals. {The solution of those minimization problems lead to additional computational costs, but sharper error bounds will be obtained, as illustrated by the numerical examples in the next section.}\\
 
For the general case, we consider $\|\cdot\|_V = \|\cdot\|_{V_0}$. {Starting from \eqref{eq:tmp5731} and using the relation \eqref{eq:normchoice} to bound the primal error by the primal residual norm,} we obtain the following error estimate
 \begin{align}\label{eq:error_estimate_General_SaddlePoint}
  \| &s(\xi) - \widetilde s(\xi) \|_Z \\
  &\leq \frac{1}{\alpha(\xi)}\sup_{0\neq z'\in Z'}\min_{y\in T_p}\frac{\| L(\xi)^* z' -A(\xi)^*y \|_{V_0'}}{\| z' \|_{Z'}}  \| A(\xi)t_{r,p}(\xi) -b(\xi) \|_{V_0'} 
:=  \Delta(\xi) , \nonumber
 \end{align}
 where $t_{r,p}(\xi)= u_{r,p}(\xi) +  R_{V_0}^{-1}A(\xi)^* y_{r,p}(\xi)$.\\

\begin{remark}
All the proposed error estimates rely on the knowledge of $\alpha(\xi)$. In the case where $\alpha(\xi)$ can not be easily computed, we can replace it by a lower bound $\alpha^{LB}(\xi)\leq \alpha(\xi)$, e.g. provided by a SCM procedure \cite{Huynh2007c}. This option will not be considered here. Another option is to remove $\alpha(\xi)$ from the definitions of $\Delta(\xi)$, therefore leading to error estimates which are no more certified error bounds. 
\end{remark}

\subsection{Greedy construction of the reduced spaces}\label{sec:offline}

Here, we propose different greedy algorithms for the construction of the reduced spaces $V_r$ and $W_k^Q$. At each iteration, we search for a parameter value $\xi^* \in \Xi$ where the error estimate $\Delta(\xi)$ is maximum, i.e. 
\begin{equation}\label{eq:MaxDelta}
{ \xi^* \in \arg\max_{\xi\in\Xi} \Delta(\xi)}.
\end{equation}
{
A first strategy is to simultaneously enrich both the primal approximation space
\begin{equation}\label{eq:Enrich_Vr}
 V_{r+1} = V_r + \text{span}( u(\xi^*) )
\end{equation}
and the dual approximation space
\begin{equation}\label{eq:Enrich_Wk}
 W_{k+l}^Q = W_k^{Q} + \text{range}( Q(\xi^*) )
\end{equation}
at each iteration. This strategy is referred as the \emph{simultaneous construction}, as opposed to the \emph{alternate construction} which consists in enriching $W_k^Q$ (resp. $V_r$) if $V_r$ (resp. $W_k^Q$) were enriched at the previous greedy iteration step. 
}

\begin{remark}
 In the literature, and for scalar-valued variables of interest, the classical approaches are either a \emph{separated construction} of $V_r$ and $W_{k}^Q$ (using two independent greedy algorithms, see for e.g. \cite{Grepl2005,Rozza2008}), or a \emph{simultaneous construction} (see \emph{e.g.} \cite{Quarteroni2011}). The latter option can take advantage of a single factorization of the operator $A(\xi^*)$ to compute both the primal and dual variables. The \emph{alternate construction} proposed here is not usual. This possibility is mentioned in remark 2.47 of the tutorial \cite{haasdonk2014reduced}.
\end{remark}

For vector-valued variables of interest ($\text{dim}(Z)>1$), 
the enrichment strategy \eqref{eq:Enrich_Wk} makes sense only if $\text{dim}(\text{range}( Q(\xi^*) )<\infty$, in which case $l = \text{dim}(W_{k+l}^Q) - \text{dim}(W_{l}^Q) <\infty$. However,  if $\text{dim}(\text{range}( Q(\xi^*) )$ is finite but very high, the enrichment strategy 
\eqref{eq:Enrich_Wk} may lead to a rapid increase of the dimension of the dual approximation space. 
Therefore, when  $\text{dim}(\text{range} (Q(\xi^*) ))$ is infinite or very high, we propose to replace 
the enrichment strategy \eqref{eq:Enrich_Wk} by
\begin{equation}\label{eq:Enrich_Wk_one}
 W_{k+1}^Q = W_k^{Q} + \text{span}( Q(\xi^*)z' ),
\end{equation}
where the space $W_k^{Q}$ is enriched with a single vector $Q(\xi^*)z'$, with 
 $z'\in Z'$ such that
\begin{align}
 &z' \in \arg\max_{\widetilde z'\in Z'} \frac{\| ( L(\xi^*)^*- A(\xi^*)^* \widetilde Q(\xi))\widetilde z' \|_{V_0'}}{\| \widetilde z' \|_{Z'}} \quad\text{(for {primal-dual} method), or} \label{eq:Find_z_primaldual}\\
 &z' \in \arg\max_{\widetilde z'\in Z'} \min_{y\in W_k^{Q}} \frac{\| L(\xi^*)^*\widetilde z' - A(\xi^*)^* y \|_{V_0'}}{\| \widetilde z' \|_{Z'}} \quad\text{(for {saddle point} method).}\label{eq:Find_z_saddle}
\end{align}
Contrarily to the {full} enrichment \eqref{eq:Enrich_Wk}, this {partial} enrichment does not necessarily lead to a zero error at the point $\xi^*$ for the next iterations. Then we expect that \eqref{eq:Enrich_Wk_one} will deteriorate the convergence properties of the algorithm, but for $\text{dim}(Z)\gg 1$, the space $W_{k+1}^Q$ defined by \eqref{eq:Enrich_Wk_one} will have a much lower dimension than the space  $ W_{k+l}^Q$ defined by \eqref{eq:Enrich_Wk}. It is worth mentioning that in \cite{Dahmen2013}, the authors propose a similar partial enrichment strategy for the test space  $T_p$ but not in a goal-oriented framework. 

The definition \eqref{eq:Wr_PrecondVr} of the test space $W_r$ requires the definition of a preconditioner $P_m(\xi)$ which is here constructed by interpolation of the inverse of $A(\xi)$. Following the idea of \cite{zahm2015interpolation}, the interpolation points for the preconditioner are chosen as the points where solutions (primal and dual) have already been computed, i.e. the points given by \eqref{eq:MaxDelta}. The resulting algorithms are summarized in Algorithm \ref{alg:Simultaneous} and Algorithm \ref{alg:Alternated} respectively for the simultaneous and the alternate constructions of $V_r$ and $W_k^Q$.

\begin{algorithm}[ht!]
\begin{algorithmic}[1]
\REQUIRE Error estimator $\Delta(\cdot)$, a samples set $\Xi$, maximum iteration $I$
\STATE Initialize $r,k=0$ and the spaces $V_r=\{0\}$ and $W_k^Q=\{0\}$
\FOR{$i=1$ to $I$}
\STATE Find $\xi_i \in \arg\max_{\xi\in \Xi} \Delta(\xi)$
\STATE Compute a factorization of $A(\xi_i)$ and update the preconditioner if needed
\STATE Solve $u(\xi_i)=A(\xi_i)^{-1}b(\xi_i)$ 
\STATE Update $V_{r+1}=V_r+\text{span}(u(\xi_i))$, and $r\leftarrow r+1$
\IF{Full dual enrichment}
\STATE Solve $Q(\xi_i)=A(\xi_i)^{-*}L(\xi_i)^*$
\STATE Update $W_{k+l}^Q=W_{k}^Q + \text{range}(Q(\xi_i))$, and $k\leftarrow k+l$
\ELSIF{Partial dual enrichment}
\STATE Find $z'$ according to \eqref{eq:Find_z_saddle} or \eqref{eq:Find_z_primaldual}
\STATE Solve $y(\xi_i)=A(\xi_i)^{-*}(L(\xi_i)^*z')$ 
\STATE Update $W_{k+1}^Q = W_{k}^Q + \text{span}(y(\xi_i))$, and $k\leftarrow k+1$
\ENDIF
\ENDFOR
\end{algorithmic}
\caption{Simultaneous construction of $V_r$ and $W_k^Q$ }
\label{alg:Simultaneous}
\end{algorithm}

\begin{algorithm}[ht!]
\begin{algorithmic}[1]
\REQUIRE Error estimator $\Delta(\cdot)$, a samples set $\Xi$, maximum iteration $I$
\STATE Initialize $r,k=0$ and the spaces $V_r=\{0\}$ and $W_k^Q=\{0\}$
\FOR{$i=1$ to $I$}
\STATE Find $\xi_i \in \arg\max_{\xi\in \Xi} \Delta(\xi)$
\STATE Compute a factorization of $A(\xi_i)$ and update the preconditioner if needed
\IF{$i$ is even}
\STATE Solve $u(\xi_i)=A(\xi_i)^{-1}b(\xi_i)$ 
\STATE Update $V_{r+1}=V_r+\text{span}(u(\xi_i))$, and $r\leftarrow r+1$
\ELSIF{$i$ is odd}
\IF{Full dual enrichment}
\STATE Solve $Q(\xi_i)=A(\xi_i)^{-*}L(\xi_i)^*$ 
\STATE Update $W_{k+l}^Q=W_{k}^Q + \text{range}(Q(\xi_i))$, and $k\leftarrow k+l$
\ELSIF{Partial dual enrichment}
\STATE Find $z'$ according to \eqref{eq:Find_z_saddle} or \eqref{eq:Find_z_primaldual}
\STATE Solve $y(\xi_i)=A(\xi_i)^{-*}(L(\xi_i)^*z')$ 
\STATE Update $W_{k+1}^Q = W_{k}^Q + \text{span}(y(\xi_i))$, and $k\leftarrow k+1$
\ENDIF
\ENDIF
\ENDFOR
\end{algorithmic}
\caption{Alternate construction of $V_r$ and $W_k^Q$ }
\label{alg:Alternated}
\end{algorithm}

\section{Numerical results}\label{sec:Numerical}

In this section, we present numerical applications of the methods proposed in Sections \ref{sec:Analysis} and \ref{sec:Application2MOR}. We first describe the applications in
Section \ref{applis}. Then we compare the projection methods for the estimation of a variable of interest in Section \ref{compproj}. Finally, we study the behavior of the proposed greedy algorithms for the construction of the reduced spaces in  Section \ref{appligreedy}.

\subsection{Applications}\label{applis}

\subsubsection{Application 1 : a symmetric problem} \label{sec:Bridge}

We consider a linear elasticity problem\footnote{The authors thank Mathilde Chevreuil for having proposed this benchmark problem.} $\text{div}(K(\xi):\varepsilon(\mathbf{u}(\xi))) = 0$ over a domain $\Omega$ (represented in Figure \ref{fig:PONT_settings_a}), where $\mathbf{u}(\xi):\Omega \to\mathbb{R}^3$ is the displacement field and  $\varepsilon(\mathbf{u})=\frac{1}{2}(\nabla \mathbf{u} +\nabla \mathbf{u}^T  )\in \mathbb{R}^{3\times 3}$ is the strain tensor associated to the displacement field $ \mathbf{u}$. The Hooke tensor $K(\xi)$ is such that
\begin{equation*}
 K(\xi):\varepsilon(\mathbf{u}(\xi))= \frac{E(\xi)}{1+\nu}\Big(\varepsilon(\mathbf{u}(\xi))+\frac{\nu }{1-2\nu}\text{trace}(\varepsilon(\mathbf{u}(\xi))) I_3\Big),
\end{equation*}
where $\nu=0.3$ is the Poisson coefficient and $E(\xi)$ is the Young modulus defined by $E(\xi) = 1_{\Omega_0} + \sum_{i=1}^6\xi_i 1_{\Omega_i}$, $1_{\Omega_i}$ being the indicator function of the subdomain $\Omega_i$, see Figure \ref{fig:PONT_settings_b}. The components of $\xi=(\xi_1,\hdots,\xi_6)$ are independent and log-uniformly distributed over $[10^{-1},10]$. We impose homogeneous Dirichlet boundary condition $\mathbf{u}(\xi)=0$ on  $\Gamma_{D}$ (red lines), a unit vertical surface load on $\Gamma_{L}$ (green square), and a zero surface load on the complementary part of the boundary (see Figure \ref{fig:PONT_settings_a}). We consider the Galerkin approximation $\mathbf{u}^h(\xi)$ of $\mathbf{u}(\xi)$ on a $\mathbb{P}_1$ finite element approximation space $\mathbf{V}^h=\text{span}(\boldsymbol{\phi}_i)_{i=1}^n\subset \{\mathbf{v}\in H^1(\Omega)^3: \mathbf{v}_{|\Gamma_D}=0\}$ of dimension $n=8916$ associated to the mesh plotted in Figure \ref{fig:PONT_settings_b}. The vector $u(\xi)\in V=\mathbb{R}^n$ such that $\mathbf{u}^h(\xi)=\sum_{i=1}^n u_i(\xi)\boldsymbol{\phi}_i$ is the solution of the linear system $A(\xi)u(\xi)=b$ of size $n$, with
\begin{align}\label{eq:BRIDGE_Aaffine}
 A(\xi) &= A^{(0)} + \sum_{k=1}^6 \xi_i A^{(k)} ~,\quad
 A^{(k)}_{i,j} = \int_{\Omega_k} \nabla \boldsymbol{\phi}_i : K_0 : \nabla \boldsymbol{\phi}_j ~\text{d}\Omega,
\end{align}
and $b_i = \int_{\Gamma_{L}} -\mathbf{e}_3\cdot \boldsymbol{\phi}_i ~\text{d}\Gamma$, where $K_0$ denotes the Hooke tensor with the Young modulus $E=1$. The norm $\| \cdot\|_{V}$ on the space $V$ is chosen such that  $\| v \|_{V}^2 = \langle A(\xi) v,v \rangle$, that means $R_V=A(\xi)$.
We also consider the parameter-independent norm $\|\cdot\|_{V_0}$ defined by 
$\| v \|_{V_0}^2 = \langle A(\xi_0) v,v \rangle$ with 
 $\xi_0=(1,\hdots,1)$. It corresponds to the norm induced by the operator associated with the Hooke tensor $K_0$ instead of $K(\xi)$.\\

Let us consider $s^h(\xi)= \mathbf{u}^h_{|\Gamma}(\xi) \cdot \mathbf{e}_3$ which is the vertical displacement of the Galerkin approximation on the blue line $\Gamma$, see Figure \ref{fig:PONT_settings_a}. We can write $s^h(\xi)=\sum_{j=1}^l s_j(\xi) \psi_j$ where $\{\psi_j\}_{j=1}^l$ is a basis of the space $\{ \mathbf{v}^h_{|\Gamma} \cdot \mathbf{e}_3: \mathbf{v}^h\in \mathbf{V}^h\}$ of dimension $l=44$. Then there exists $L\in\mathbb{R}^{l\times n}$ such that
\begin{equation*}
 s(\xi)=Lu(\xi),
\end{equation*}
where $s(\xi)=(s_1(\xi),\hdots,s_l(\xi))\in Z=\mathbb{R}^l$ is the variable of interest. The norm $\|\cdot\|_Z$ is defined as the canonical norm of $\mathbb{R}^l$.
 
\begin{figure}[h!]
   \centering
   \subfigure[Geometry, boundary condition and variable of interest.]{\centering
     \includegraphics[width=.7\textwidth]{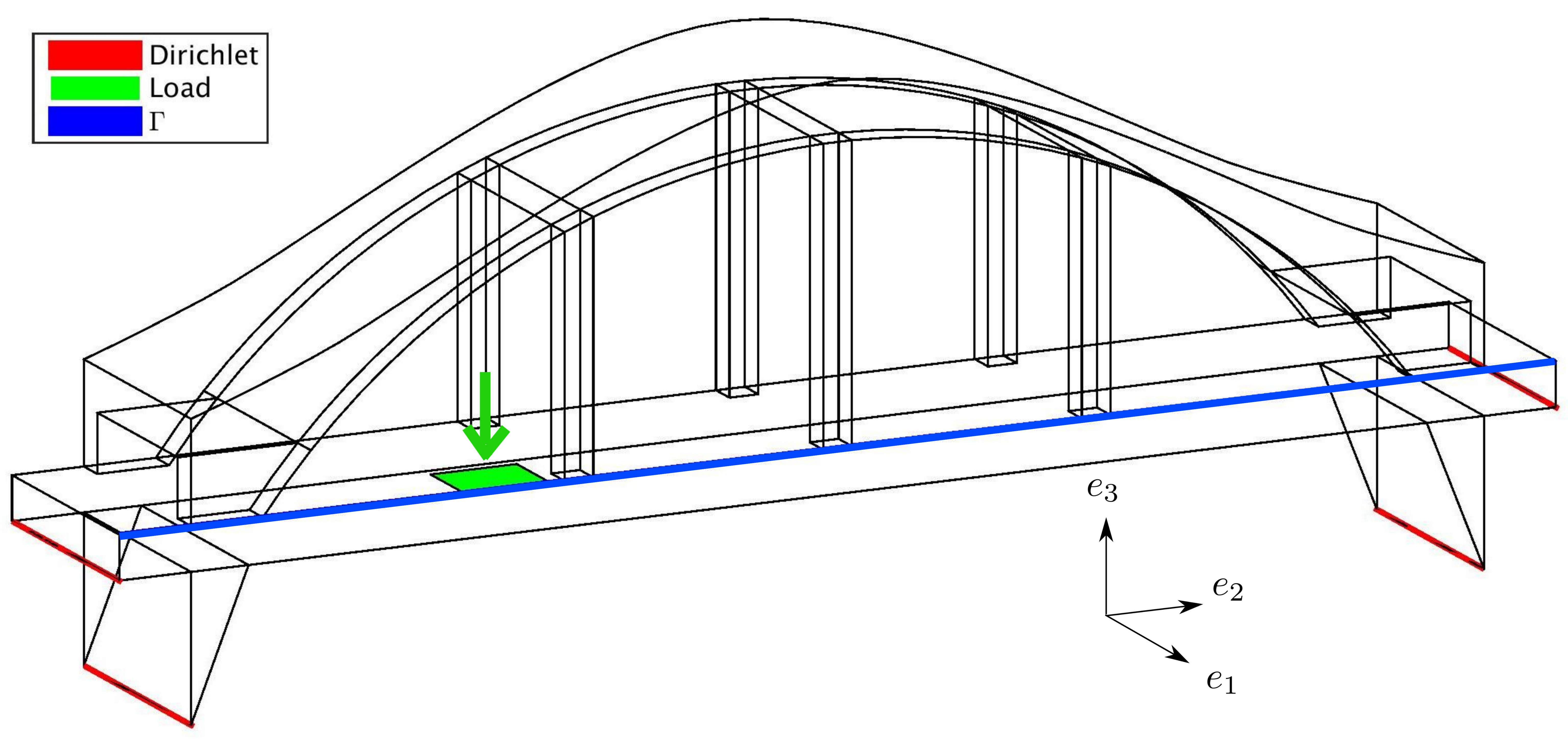}
     \label{fig:PONT_settings_a}
   } \\
   \subfigure[Realization of a solution and mesh of the domain $\Omega$. The colors corresponds to the different sub-domains $\Omega_i$ for $i=0,\hdots,6$.]{\centering
     \includegraphics[width=.7\textwidth]{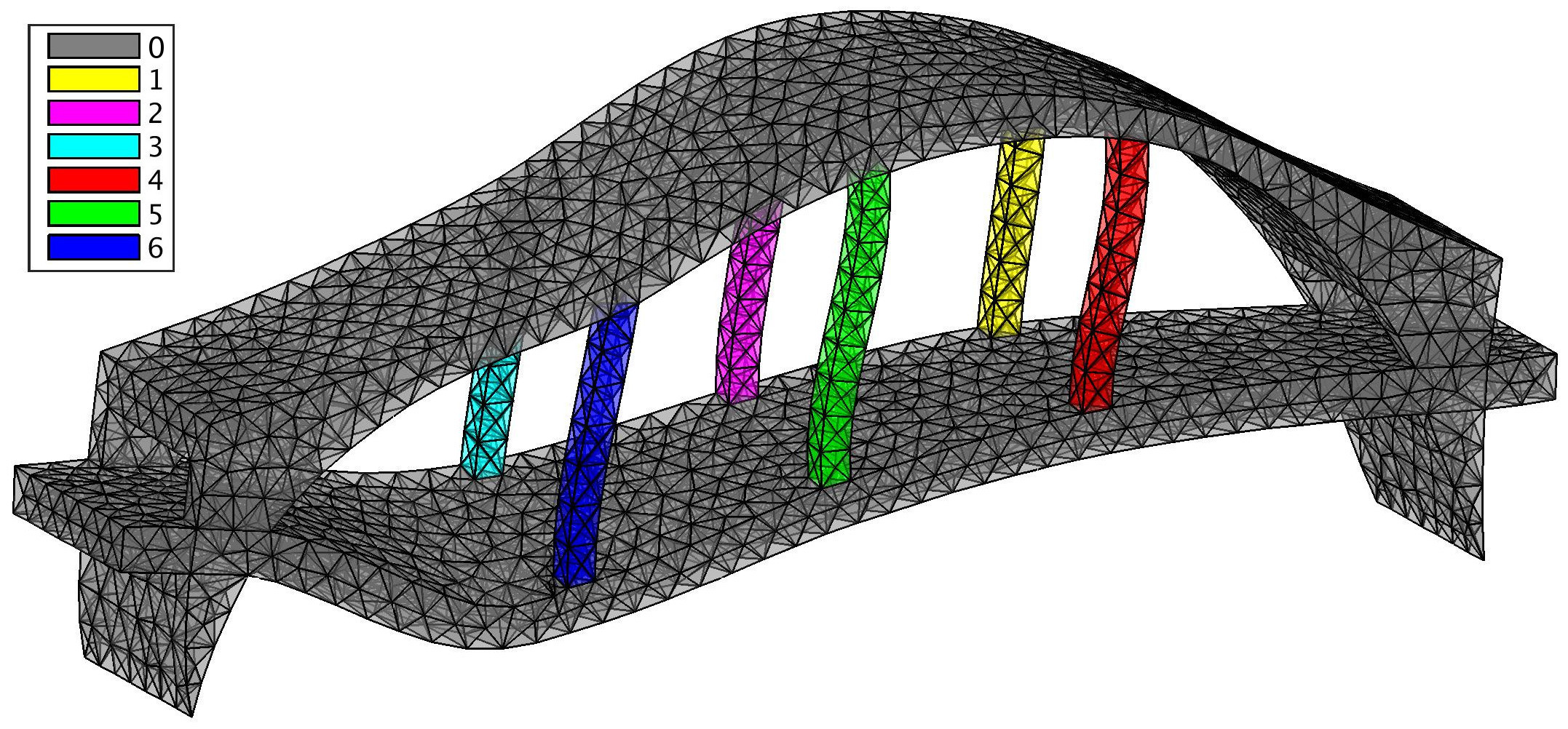}
     \label{fig:PONT_settings_b}
   }
   \caption{Application 1: schematic representation of the problem and a realization of the solution.}
   \label{fig:PONT_settings}
\end{figure}

\subsubsection{Application 2: a non symmetric problem}

We consider the benchmark problem of the cooling of electronic components proposed in the OPUS  project\footnote{See http://www.opus-project.fr}. The equation to solve is an advection-diffusion equation
over  the domain $\Omega \subset \mathbb{R}^2$
\begin{equation}
\label{eq:OPUS}
-\nabla \cdot (\kappa(\xi) \nabla {T}(\xi)) + \mathbf{c}(\xi) \cdot  \nabla  T(\xi)= f,
\end{equation}
whose solution $T(\xi):\Omega \to\mathbb{R}$ is the temperature field. Here $\kappa$ and $\mathbf{c}$ denote respectively  the diffusion coefficient and the advection field, which are parameter-dependent coefficients of the operator.  The full description of this problem is given in \cite{zahm2015interpolation}.  
Here, we only focus on the resulting algebraic parameter-dependent equation coming from stabilized finite element discretization of \eqref{eq:OPUS}, that is $A(\xi)u(\xi)=b(\xi)$, where $u(\xi) \in \mathbb{R}^n$ are the coefficients of the  finite element approximation $T^h = \sum_{i=1}^n u_i \varphi_i$ of $T$, and  where $\xi=(\xi_1,\hdots,\xi_4)$ is a 4-dimensional random vector.
The space $V=\mathbb{R}^n$ with $n=2.8\times 10^4$ is endowed with the norm $\|\cdot\|_{V}=\|\cdot\|_{V_0}$ which corresponds to the $H^1(\Omega)$-norm\footnote{ It means that $\| v \|_{V_0}=\| v^h \|_{H^1(\Omega)}$ for all $v\in V$, where $v^h=\sum_{i=1}^n v_i\varphi_i$.}. The variable of interest $s(\xi) = (s_1(\xi),s_2(\xi))$ is the mean temperature of both  electronic components, with
\begin{equation}
 s_1(\xi) = \frac{1}{|\Omega_{IC_1}|}\int_{\Omega_{IC_1}} T^h(\xi) \text{d}\Omega \quad,\quad s_2(\xi) = \frac{1}{|\Omega_{IC_2}|}\int_{\Omega_{IC_2}} T^h(\xi) \text{d}\Omega, 
\end{equation}
where $\Omega_{IC_i}$ ($i=1,2$) are two subdomains of $\Omega \subset \mathbb{R}^2$ (see \cite[Fig.7]{zahm2015interpolation}). 
Then we can write $s(\xi) = Lu(\xi)$ for an appropriate $L\in\mathbb{R}^{l\times n}$, with $l=2$. Here we have $Z=\mathbb{R}^2$, which we equip with the canonical norm on $\mathbb{R}^2$.

\subsection{Comparison of the projections methods}\label{compproj}

The goal of this section is to compare the projection methods proposed in Section \ref{sec:Analysis} for the estimation of $s(\xi)$. Here the approximation spaces $V_r$, $W_k^Q$ and the test space $W_r$ are given. We denote by $\MV_r$, $\MW_k^Q$ and $\MW_r$ the matrices containing the basis vectors of the corresponding subspaces. {In order to improve condition numbers of reduced systems of equations, these bases are orthogonalized using a Gram-Schmidt procedure.}

\subsubsection{Application 1}

We first detail how we build $\MV_r$, $\MW_k^Q$ and $\MW_r$. The matrix ${\MV}_r$ contains $r=20$ snapshots of the solution: ${\MV}_r=(u(\xi_1),\hdots,u(\xi_{20}))$. The test space is $W_r=V_r$, which corresponds to a standard Galerkin projection method. The matrix ${\MW}_k^Q$ contains $2$ snapshots of the dual variable $Q(\xi) = A(\xi)^{-1}L^* \in\mathbb{R}^{n\times l}$. Then $k=2l=88$. Finally, according to \eqref{eq:def_Wp} the matrix ${\MT}_p=\big( {\MW}_r , {\MW}_k^Q \big)$ is the concatenation of the matrices ${\MW}_r$ and ${\MW}_k^Q$.

We consider a samples set $\Xi_t\subset \Xi$ of size $t=10^4$. For each $\xi\in\Xi_t$ we compute the exact quantity of interest $s(\xi)$ and the approximation $\widetilde s(\xi)$ by the following methods.

\begin{itemize}
 \item \emph{Primal only}: solve the linear system $\big( {\MV}_r^T A(\xi) {\MV}_r\big){U}_r(\xi) = \big({\MV}_r^Tb\big)$ of size $r$ 
 and compute $\widetilde s(\xi) = \big(L{\MV}_r\big) {U}_r(\xi)$.
 \item \emph{Dual only}: solve the linear system $\big(({\MW}_k^Q)^T A(\xi) {\MW}_k^Q \big) {Y}_k(\xi) = \big(({\MW}_k^Q)^Tb \big)$ of size $k$ and compute $\widetilde s(\xi) = \big( L{\MW}_k^Q \big) {Y}_k(\xi)$.\footnote{The dual only method corresponds to the \emph{primal-dual} method where we consider a zero primal approximation, i.e. $V_r=W_r=\{0\}$.}
 \item \emph{Primal-dual}: solve the linear system of the \emph{Primal only} method, solve the linear system $\big( ({\MW}_k^Q)^T A(\xi) {\MW}_k^Q \big) {Y}_k(\xi) = \big( ({\MW}_k^Q)^Tb\big)-\big( ({\MW}_k^Q)^T A(\xi) {\MV}_r \big) {U}_r(\xi)$ of size $k$ and compute $\widetilde s(\xi) = \big( L{\MV}_r\big) {U}_r(\xi) + \big( L{\MW}_k^Q\big) {Y}_k(\xi)$.
 \item \emph{Saddle point}: According to Remark \ref{rmk:SPD_2}, solve the linear system 
 \begin{equation*}
 \big( {\MT}_p^TA(\xi) {\MT}_p\big)  {Y}_p(\xi) = \big( {\MT}_p^Tb \big) 
 \end{equation*}
 of size $p=k+r$, and compute $\widetilde s(\xi) = \big( L {\MT}_p \big) {Y}_{p}(\xi)$.
\end{itemize}

The affine decomposition \eqref{eq:BRIDGE_Aaffine} of matrix $A(\xi)$ allows for a rapid solution of the reduced systems for any parameter $\xi$. \\

Figure \ref{fig:PONT_proj} gives the probability density function (PDF), the $L^\infty$ norm and $L^2$ norm of the error $\| s(\xi)-\widetilde s(\xi)\|_Z$ estimated over the samples set $\Xi_t$. We see that the \emph{primal-dual} method provides errors for the quantity of interest which correspond to the product of the errors of the \emph{primal only} and \emph{dual only} methods. This  reflects the ``squared effect''. Moreover the \emph{saddle point} method provides errors that are almost 10 times lower than the \emph{primal-dual} method. This impressive improvement can be explained by the fact that the proposed problem is ``almost compliant'', in the sense that the primal and dual solutions are similar: the primal solution is associated to a vertical force on the green square of Figure \ref{fig:PONT_settings_a}, and the dual solution is associated to a vertical loading on $\Gamma$. To illustrate this, let us consider a ``less compliant'' application where the variable of interest is defined as the horizontal displacement (in the direction $e_2$, see figure \ref{fig:PONT_settings_a}) of the solution on the blue line $\Gamma$, i.e. $ s^{h}(\xi) = \mathbf{u}_{|\Gamma}(\xi) \cdot \mathbf{e}_{2}$ (instead of $ s^{h}(\xi) = \mathbf{u}_{|\Gamma}(\xi) \cdot \mathbf{e}_{3}$). The results are given in Figure \ref{fig:PONT_proj_L2}. For this new setting, we can draw similar conclusions but the \emph{saddle point} method provides a solution which is ``only'' 2 times better (instead of 10 times) than the \emph{primal-dual} method.

\begin{figure}[h!]
   \centering
   \subfigure[PDF of the error.]{\centering
     \includegraphics[width=.45\textwidth]{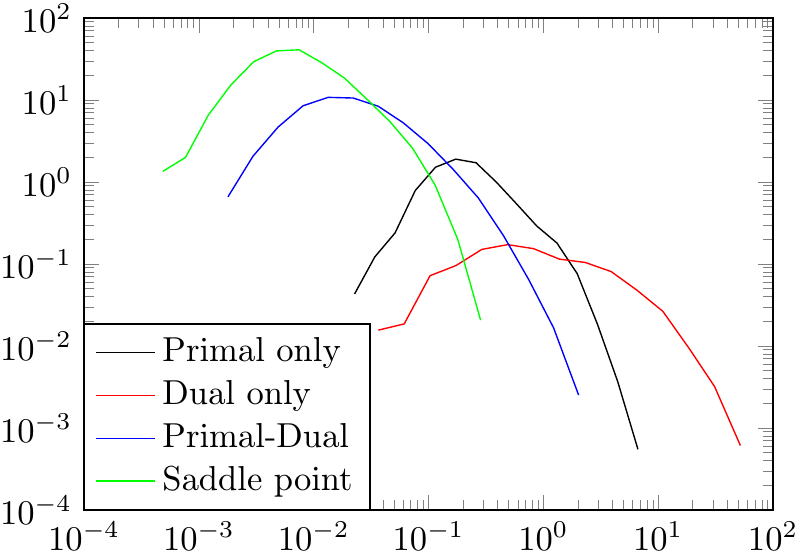}
     \label{fig:PONT_proj_a}
   }~~
   \subfigure[ $L^\infty$ and $L^2$ norm of the error.]{
     \centering
     \includegraphics[scale=1]{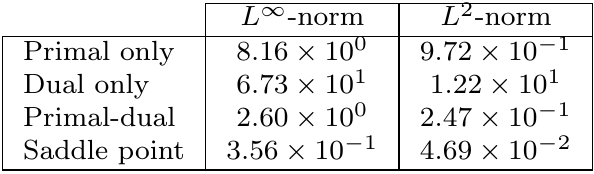}
     \label{fig:PONT_proj_b}
   }
   \caption{Application 1: Probability density function, $L^\infty$ norm and $L^2$ norm of the error $\| s(\xi)-\widetilde s(\xi)\|_Z$ estimated on a samples set of size $10^4$.}
   \label{fig:PONT_proj}
\end{figure}

\begin{figure}[h!]
   \centering
   \subfigure[PDF of the error.]{\centering
     \includegraphics[width=.45\textwidth]{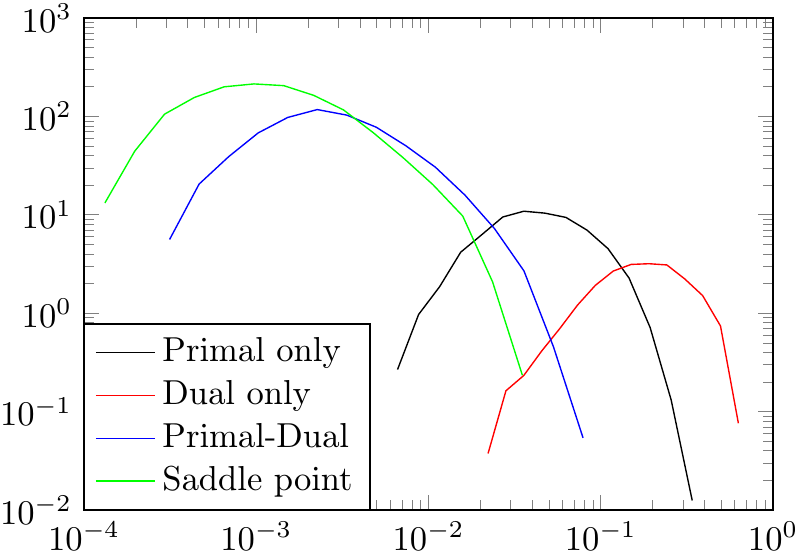}
     \label{fig:PONT_proj_a_L2}
   }~~
   \subfigure[ $L^\infty$ and $L^2$ norm of the error.]{
     \centering
     \includegraphics[scale=1]{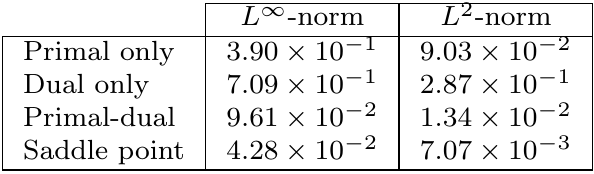}
     \label{fig:PONT_proj_b_L2}
   }
   \caption{Application 1 with a different variable of interest (``less compliant'' case): Probability density function, $L^\infty$ norm and $L^2$ norm of the error $\| s(\xi)-\widetilde s(\xi)\|_Z$ estimated on a samples set of size $10^4$.}
   \label{fig:PONT_proj_L2}
\end{figure}

Now we consider  the effectivity index $\eta(\xi) = \Delta(\xi)/\| s(\xi)-\widetilde s(\xi)\|_Z$ associated to the primal-dual error estimate defined by \eqref{eq:standard_error_estimate} and to the saddle-point error estimate defined by \eqref{eq:error_estimate_SPD_SaddlePoint}. For the considered application, the coercivity constant $\alpha(\xi)$ can be obtained  by the \emph{min-theta} method \cite[Proposition 2.35]{haasdonk2014reduced}. Figure \ref{fig:PONT_proj_ErrorEst} presents statistical information on $\eta(\xi)$: the PDF, the mean, the max-min ratio and the normalized standard deviation estimated on a samples set of size $10^4$. We first observe in Figure \ref{fig:PONT_proj_ErrorEst_a} that the effectivity index is always greater than $1$: this illustrates the fact that the error estimates are certified. Moreover, the error estimate of the saddle point method is much better than the one of the primal-dual method. The max-min ratio and the standard deviation of the corresponding effectivity index are much smaller and  the mean value is much closer to one for the saddle point method.

\begin{figure}[h!]
   \centering
   \subfigure[PDF of $\eta(\xi)$ for the primal-dual method and the saddle point method.]{\centering
     \includegraphics[width=.45\textwidth]{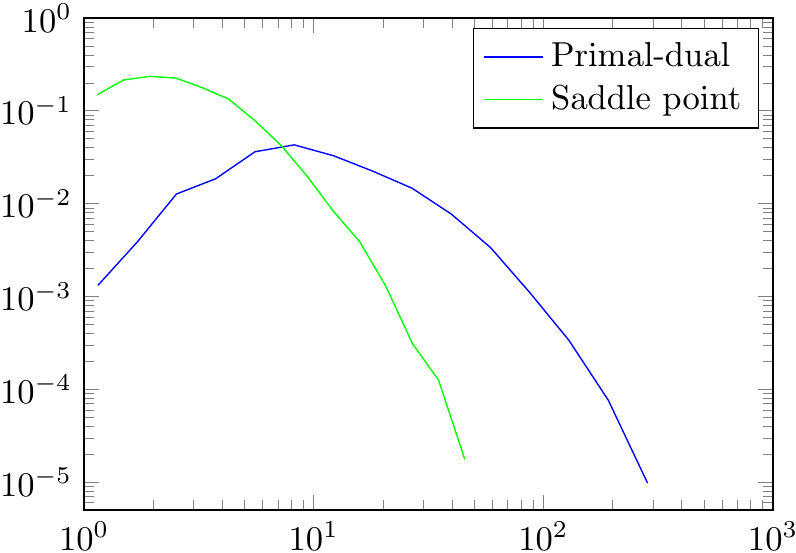}
     \label{fig:PONT_proj_ErrorEst_a}
   }~~
   \subfigure[Statistics of the effectivity index $\eta(\xi)$ for the primal-dual method and saddle point method.]{
     \centering
     \includegraphics[scale=1]{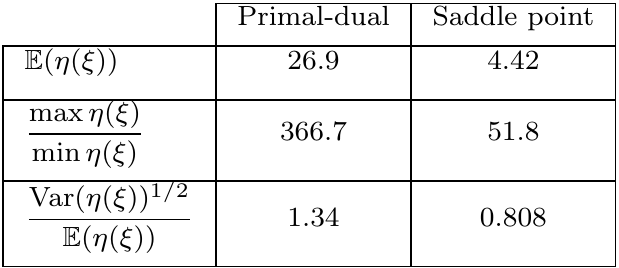}
     \label{fig:PONT_proj_ErrorEst_b}
   }
   \caption{Application 1: Probability density function, mean, min-max ratio and normalized standard deviation of the effectivity index $\eta(\xi) = \Delta(\xi)/\| s(\xi)-\widetilde s(\xi)\|_Z$ estimated on a samples set of size $10^4$. Here, $\Delta(\xi)$ is defined by \eqref{eq:standard_error_estimate} for the primal-dual method and by \eqref{eq:error_estimate_SPD_SaddlePoint} for the saddle point method.}
   \label{fig:PONT_proj_ErrorEst}
\end{figure}

\subsubsection{Application 2}\label{sec:appli22}

For this second application, ${\MV}_r = (u(\xi_1),\hdots,u(\xi_{50}))$ contains $50$ snapshots of the primal solution ($r=50$), and ${\MW}_k^Q=(Q(\xi_1) \hdots Q(\xi_{25}))$ contains $25$ snapshots of the dual solution so that the dimension of $W_k^Q$ is $k=25l=50$. The test space $W_r$ is defined according to \eqref{eq:Wr_PrecondVr}, where $P_m(\xi)$ is an interpolation of $A(\xi)^{-1}$ using $m$ interpolation points selected by a greedy procedure based on the residual $\| I-P_m(\xi)A(\xi)\|_F$ (where $\|\cdot\|_F$ denotes the matrix Frobenius norm), see \cite{zahm2015interpolation}. The interpolation is defined by a Frobenius semi-norm projection (with positivity constraint) using a {random}  matrix with $400$ columns. The matrix 
associated to the test space is given by ${\MW}_r(\xi)=P_m^T(\xi)R_V {\MV}_r$.

Once again, we consider a samples set $\Xi_t$ of size $t=10^4$. For any $\xi\in\Xi_t$ we compute the exact quantity of interest $s(\xi)$ and the approximation $\widetilde s(\xi)$ by the following methods.

\begin{itemize}
 \item \emph{Primal only}: solve the linear system $\big({\MW}_r^T(\xi) A(\xi) {\MV}_r\big)U_r(\xi) = {\MW}_r(\xi)^T b$ of size $r$ and compute $\widetilde s(\xi) = \big(L{\MV}_r\big) U_r(\xi)$.
 \item \emph{Dual only}: solve the linear system 
 \begin{equation*}
  \big(({\MW}_k^Q)^T A(\xi) R_V^{-1}A(\xi)^* {\MW}_k^Q \big) Y_k(\xi) = ({\MW}_k^Q)^T b  
 \end{equation*}
 of size $k$ and compute $\widetilde s(\xi) = \big( L R_V^{-1}A(\xi)^* {\MW}_k^Q \big) Y_k(\xi)$.
 \item \emph{Primal-dual}: solve the linear system of the {Primal only} method, solve the linear system 
 \begin{equation*}
  \big( ({\MW}_k^Q)^T A(\xi) R_V^{-1} A(\xi)^* {\MW}_k^Q \big) Y_k(\xi) = \big( ({\MW}_k^Q)^Tb\big) - \big( ({\MW}_k^Q)^T A(\xi) {\MV}_r \big) U_r(\xi)
 \end{equation*}
 of size $k$, and compute $\widetilde s(\xi) = \big( L{\MV}_r\big) U_r(\xi) + \big( L R_V^{-1}A(\xi)^* {\MW}_k^Q \big) Y_k(\xi)$.
 \item \emph{Saddle point}: solve the linear system of size $p+r$
 \begin{equation*}
  \begin{pmatrix} {\MT}_p^T(\xi)A(\xi)R_V^{-1}A(\xi)^* {\MT}_p(\xi) & {\MT}_p^T(\xi)A(\xi){\MV}_r \\ \big({\MT}_p^T(\xi)A(\xi){\MV}_r\big)^T& 0\end{pmatrix}
  \begin{pmatrix} Y_{r,p}(\xi)\\U_{r,p}(\xi) \end{pmatrix}
  =\begin{pmatrix} {\MT}_p(\xi)^Tb  \\0 \end{pmatrix}
 \end{equation*}
 with ${\MT}_p(\xi)=\big( {\MW}_r(\xi),{\MW}_k^Q \big)$, and compute
 \begin{equation*}
  \widetilde s(\xi) = \big(L {\MV}_r\big) U_{r,p}(\xi) + \big(LR_V^{-1} A(\xi)^* {\MT}_p(\xi)\big) Y_{r,p}(\xi).
 \end{equation*}
\end{itemize}

~\\
The numerical results are given in Figure \ref{fig:OPUS_proj}. Once again, the {saddle point} method leads to the lowest error on the variable of interest. Also, we see that a good preconditioner (for example with $m=30$) improves the accuracy for the {saddle point} method, the {primal only} method and the {primal-dual} method. However, this improvement is not really significant for the considered application: the errors are barely divided by $2$ compared to the non preconditioned Galerkin projection ($m=0$). In fact, the preconditioner improves the quality of the test space, and the choice $W_r=V_r$ (yielding the standard Galerkin projection) is sufficiently accurate for this example and for the chosen norm on $V$.

\begin{figure}[ht!]
   \centering
   \subfigure[PDF of the error. Three different preconditioners $P_m(\xi)$ are used: $m=0$ (dotted lines), $m=10$ (dashed lines) and $m=30$ (continuous lines).]{\centering
     \includegraphics[width=.45\textwidth]{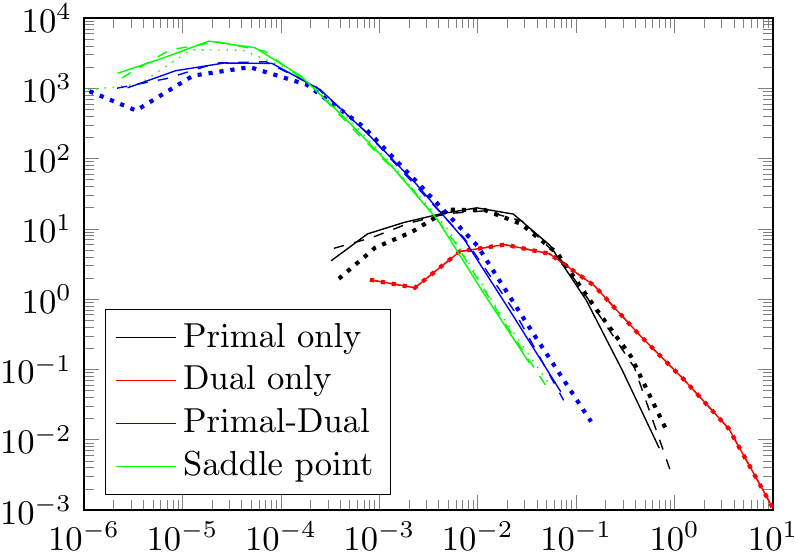}
     \label{fig:OPUS_proj_a}
   }~~
   \subfigure[ $L^\infty$ and $L^2$ norm of the error.]{
     \centering
     \includegraphics[scale=0.85]{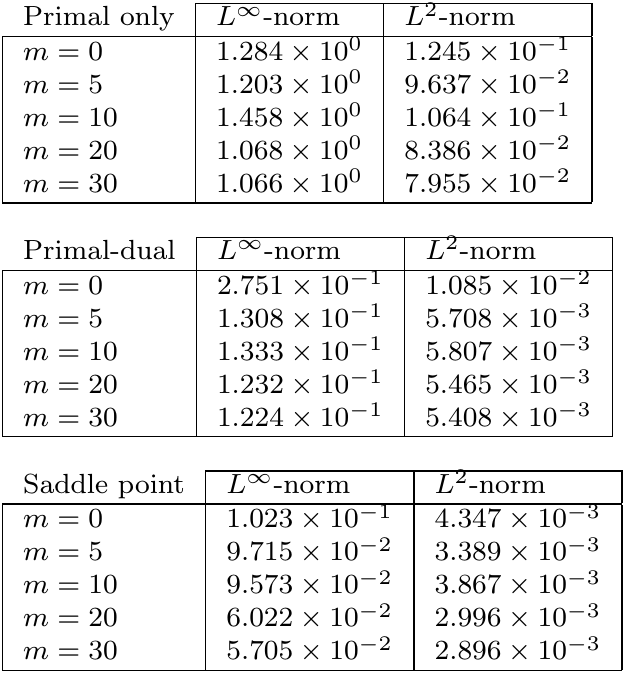}
     \label{fig:OPUS_proj_b}
   }
   \caption{Application 2: Probability density function, $L^\infty$ norm and $L^2$ norm of the error $\| s(\xi)-\widetilde s(\xi)\|_Z$ estimated from a samples set of size $10^4$.}
   \label{fig:OPUS_proj}
\end{figure}

We discuss now the quality of the error estimate $\Delta(\xi)$ for the variable of interest. Since in this application the constant $\alpha(\xi)$ can not be easily computed, we consider surrogates for \eqref{eq:standard_error_estimate} and \eqref{eq:error_estimate_General_SaddlePoint} using a preconditoner $P_m(\xi)$. We consider
\begin{equation}\label{eq:standard_error_estimate_precond}
  \Delta(\xi) = \|P_m(\xi)( A(\xi) {u_r}(\xi) - b(\xi) ) \|_{V_0} \| L(\xi)^* - A(\xi)^* {Q_k}(\xi) \|_{Z'\rightarrow V_0'}
\end{equation}
for the primal-dual method, and 
\begin{equation}\label{eq:error_estimate_General_SaddlePoint_precond}
  \Delta(\xi) = \| P_m(\xi)( A(\xi)t_{r,p}(\xi) -b(\xi) ) \|_{V_0} \sup_{0\neq z'\in Z'}\inf_{y\in T_p}\frac{\| L(\xi)^* z' -A(\xi)^*y \|_{V_0'}}{\| z' \|_{Z'}}  
 \end{equation}
for the saddle point method. Figure \ref{fig:OPUS_proj_ErrorEst} shows statistics of the effectivity index $\eta(\xi)=\Delta(\xi)/\| s(\xi) - \widetilde s(\xi) \|_Z$ for different numbers $m$ of interpolation points for the preconditioner. We see that the max-min ratio and the normalized standard deviation are decreasing with $m$: this indicates an improvement of the error estimate. Furthermore, the mean value of $\eta(\xi)$ seems to converge (with  $m$) to 19.5 for the primal-dual method, and to  13.8 for the saddle point method. In fact, with a good preconditioner, $\| P_m(\xi)( A(\xi){ u_r(\xi)} -b(\xi) ) \|_{V_0}$ (or $\| P_m(\xi)( A(\xi) t_{r,p}(\xi) -b(\xi) ) \|_{V_0}$) is expected to be a good approximation of the primal error $\| u(\xi) -{  u_r(\xi)}\|_{V_0}$ (or $\| u(\xi) - t_{r,p}(\xi)\|_{V_0}$), but this does not ensure that the effectivity index $\eta(\xi)$ will converge to 1.

\begin{figure}[ht!]
   \centering
   \centering
   \subfigure[PDF of $\eta(\xi)$ for the primal-dual methods and the saddle point methods. Three different preconditioners $P_m(\xi)$ are used: $m=0$ (dotted lines), $m=10$ (dashed lines) and $m=30$ (continuous lines)]{\centering
     \includegraphics[width=.45\textwidth]{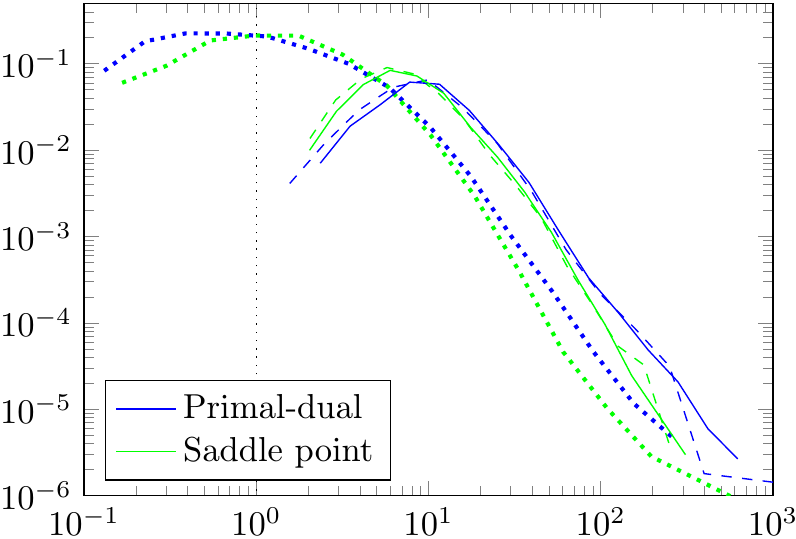}
     \label{fig:OPUS_proj_est_a}
   }~~
   \subfigure[Statistics of the effectivity index $\eta(\xi)$ for the primal-dual method and the saddle point method.]{
     \centering
     \includegraphics[scale=0.85]{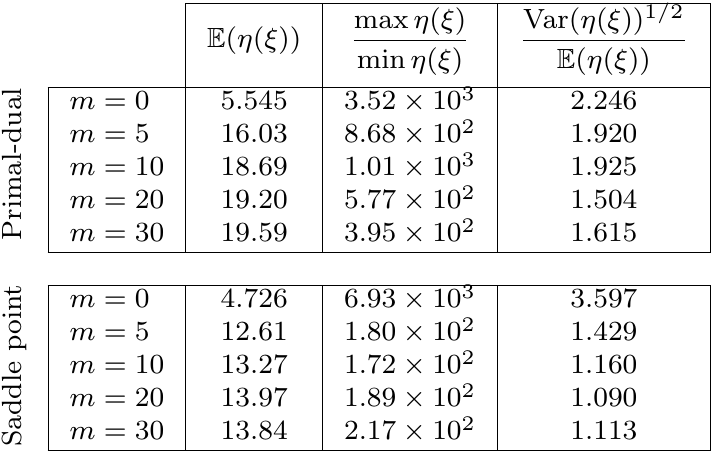}
     \label{fig:OPUS_proj_est_b}
   }
   \caption{Application 2: PDF, mean, max-min ratio and normalized standard deviation of the effectivity index $\eta(\xi)=\Delta(\xi)/\| s(\xi) - \widetilde s(\xi) \|_Z$. Here, $\Delta(\xi)$ is defined by \eqref{eq:standard_error_estimate_precond} for the primal-dual method and by \eqref{eq:error_estimate_General_SaddlePoint_precond} for the saddle point method.}
   \label{fig:OPUS_proj_ErrorEst}
\end{figure}

\subsubsection{Partial conclusions and remarks}\label{sec:Numerical_Proj_conclusion}

In both numerical examples, the {saddle point} method provides the most accurate estimation for the variable of interest. Let us note that the {saddle point} problem requires the solution of a dense linear system of size $(r+k)$ for the symmetric and coercive case, and of size $(2r+k)$ for the general case. When using Gauss elimination method for the solution of those systems, the complexity is either in $ C (r+k)^3 $ or $ C (2r+k)^3 $ (with $C=2/3$), which is larger than the complexity of the {primal-dual} method $ C (r^3+k^3)$. However, in the case where the primal and dual approximation spaces have the same dimension $r=k$, the {saddle point} method is only $4$ times (in the symmetric and coercive case) or $13.5$ times (in the general case) more expensive. 

{For the present applications, we showed that the preconditioner slightly improves the quality of the estimation $\widetilde s(\xi)$, and of the error estimate $\Delta(\xi)$. Since the construction of the preconditionner yields a significant increase in computational and memory costs (see \cite{zahm2015interpolation}), the preconditioning is not mandatory here. Nevertheless, these results revealed the important role of the test space $W_r(\xi)$ to reduce the projection error. The preconditioner used for constructing $W_r(\xi)$ can be improved, for example with a better selection of the interpolation point for $P_m(\xi)$, see Equation \eqref{eq:Wr_PrecondVr}. Note also that alternative methods can be also applied  for constructing $W_r(\xi)$, such as the subspace interpolation method proposed in \cite{amsallem2008interpolation}.}


\subsection{Greedy construction of the reduced spaces}\label{appligreedy}

We now consider the greedy construction of the reduced spaces by Algorithms \ref{alg:Simultaneous} or \ref{alg:Alternated}. For the two considered applications, we show the convergence of the error estimate with respect to the complexity of the {offline} and of the {online} phase. For the sake of simplicity, we measure the complexity of the {offline} phase with the number of operator factorizations (this corresponds to the number of iterations $I$ of Algorithms \ref{alg:Simultaneous} and \ref{alg:Alternated}). Of course exact estimation of the {offline} complexity should take into account many other steps (for example, the computation of $\Delta(\xi)$, of the preconditioner, etc), but the operator factorization is considered, for large scale applications, as the main source of computation cost. For the {online} complexity, we only consider the computation cost for the solution of one reduced system, see Section \ref{sec:Numerical_Proj_conclusion}. Here we do not take into account the complexity for assembling the reduced systems although it may be a significant part of the complexity for ``not so reduced'' systems of equations.

\subsubsection{Application 1}

Figure \ref{fig:BRIDGE_RB} shows the convergence of $\sup_\xi \Delta(\xi)$ with respect to the {offline} and {online} complexities (as defined above). In Figure \ref{fig:BRIDGE_RB_OFFLINE}, we see that the saddle point method (dashed lines) always provides lower values for the error estimate compared to the primal-dual method (continuous lines). However, as already mentioned, the saddle point method requires the solution of larger reduced systems during the \emph{online} phase. Therefore, the primal-dual method can sometimes provide lower error estimates (see the blue and red curves of Figure \ref{fig:BRIDGE_RB_ONLINE}) for the same \emph{online} complexity.

The simultaneous construction of $V_r$ and $W_k^Q$ with full dual enrichment \eqref{eq:Enrich_Wk} (green curves) yields a very fast convergence of the error estimate during the \emph{offline} phase, see Figure \ref{fig:BRIDGE_RB_OFFLINE}). But the rapid increase of $\text{dim}(W_k^Q)$ leads to high \emph{online} complexity, so that this strategy becomes non competitive during the \emph{online} phase, see Figure \ref{fig:BRIDGE_RB_ONLINE}.

We compare now the alternate and the simultaneous construction of $V_r$ and $W_k^Q$ with partial dual enrichment \eqref{eq:Enrich_Wk_one} (red and blue curves in Figure \ref{fig:BRIDGE_RB}). 
The initial idea of the alternate construction is to build reduced spaces of better quality. Indeed, since the evaluation points of the primal solution are different from the one of the dual solution, the reduced spaces are expected to contain ``complementary information'' for the approximation of the variable of interest. 
In practice, we observe in Figure \ref{fig:BRIDGE_RB_OFFLINE} that the alternate construction is (two times) more expensive during the \emph{offline} phase, but the resulting error estimate behaves very similarly to the simultaneous strategy, see Figure \ref{fig:BRIDGE_RB_ONLINE}. We conclude that the alternate strategy is not relevant for this application.

Furthermore, let us note that after iteration 50 of the greedy algorithm, the rate of convergence of the dashed red curve of Figure \ref{fig:BRIDGE_RB_OFFLINE} (i.e. the simultaneous construction with partial dual enrichment using the saddle point method) rapidly increases. A possible explanation  is that the dimension of the dual approximation space is large enough to reproduce correctly the dual variable, which requires a dimension higher than $l=44$. The same observation can be done for the alternative strategy (the dashed blue curve) after  iteration $100$ (which corresponds to $\text{dim}(W_k^Q)\geq 50$). Also, we note that the primal-dual method does not present this behavior.

\begin{figure}[ht!]
   \centering
   \subfigure[Maximum value of the error estimate $\Delta(\xi)$ with respect to the number of operator factorizations.]{\centering
     \includegraphics[width=.45\textwidth]{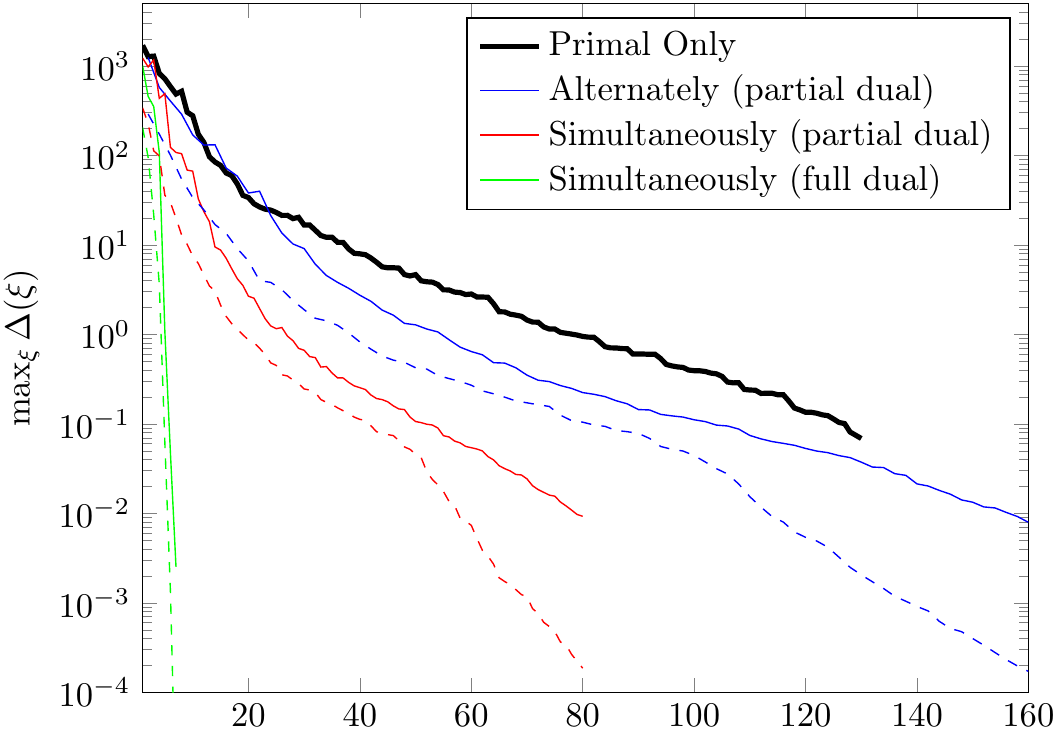}
     \label{fig:BRIDGE_RB_OFFLINE}
   }~~
   \subfigure[Maximum value of the error estimate $\Delta(\xi)$ with respect to the complexity of solving one reduced system.]{
     \centering
     \includegraphics[width=.45\textwidth]{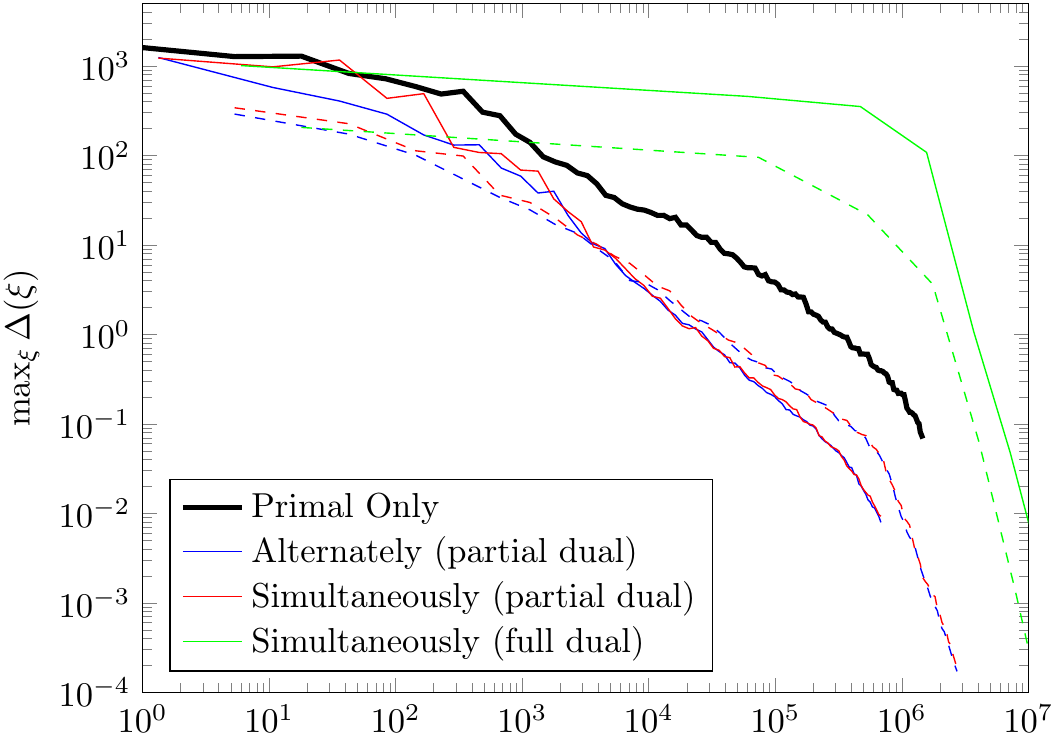}
     \label{fig:BRIDGE_RB_ONLINE}
   }
   \caption{Application 1: error estimate $\sup_{\xi}\Delta(\xi)$ with respect to the {offline} complexity (Figure \ref{fig:BRIDGE_RB_OFFLINE}) and the {online} complexity (Figure \ref{fig:BRIDGE_RB_ONLINE}). The continuous lines correspond to the {primal-dual} method, and the dashed lines correspond to the {saddle point} method. The {primal only} curves serve as reference.}
  \label{fig:BRIDGE_RB}
\end{figure}

\subsubsection{Application 2}

For the application 2, we first test Algorithms \ref{alg:Simultaneous} and \ref{alg:Alternated} with the use of  a preconditioner (defined in Section \ref{sec:appli22}).
The interpolation points for the preconditioner are the ones where the solutions (primal and dual) have been computed, see Algorithms \ref{alg:Simultaneous} and \ref{alg:Alternated}. The preconditioner is used for the definition of the test space $W_r(\xi)$, see equation \eqref{eq:Wr_PrecondVr}, and for the error estimate $\Delta(\xi)$, see equation \eqref{eq:standard_error_estimate_precond} for the primal-dual method and \eqref{eq:error_estimate_General_SaddlePoint_precond} for the saddle point method. The numerical results are given in Figure \ref{fig:OPUS_RB}. We can draw the same conclusions as for application 1.

\begin{itemize}
 \item In the {offline} phase, the saddle point method provides lower errors (Figure \ref{fig:OPUS_RB_OFFLINE}). However, the corresponding reduced systems are larger, and we see that the primal-dual method provides lower errors for the same {online} complexity, see Figure \ref{fig:OPUS_RB_ONLINE}. For this test case, the benefits (in term of accuracy) of the saddle point method does not compensate the additional {online} computational costs.
 \item The full dual enrichment yields a fast convergence during the {offline} phase, but the rapid increase of $W_k^Q$ is disadvantageous regarding the {online} complexity. However, since the dimension of the variable of interest is ``only'' $l=2$, the full dual enrichment is still an acceptable strategy (compared to the previous application).
 \item Here, the alternate strategy (blue curves) seems to yield slightly better reduced spaces compared to the simultaneous strategy, see Figure \ref{fig:OPUS_RB_ONLINE}. But this leads to higher offline costs, see Figure \ref{fig:OPUS_RB_OFFLINE}.
\end{itemize}

We also run numerical tests without using the preconditioner. In that case, we replace $P_m(\xi)$ by $R_V^{-1}$. Figure \ref{fig:OPUS_RB_withOUTPrecond} shows  numerical results which are very similar to those of Figure \ref{fig:OPUS_RB}. To illustrate the benefits of using the preconditioner, let us consider the effectivity index $\eta(\xi)=\Delta(\xi)/\| s(\xi) - \widetilde s(\xi)\|_Z$ associated to the error estimate for the variable of interest. Figure \ref{fig:RB_OPUS_ETA} shows the confidence interval $I(p)$ of probability $p$ for $\eta(\xi)$ defined as the smallest interval which satisfies
\begin{equation*}
 \mathbb{P}( \xi\in\Xi_t : \eta(\xi)\in I(p) )\geq p,
\end{equation*}
where $\mathbb{P}(A)=\#A/\#\Xi_t$ for $A\subset \Xi_t$. When using the preconditioner, we see in Figure \ref{fig:RB_OPUS_ETA} that the effectivity index is improved during the greedy iterations  in the sense that the confidence intervals are getting smaller and smaller. Also, we note that after the iteration $15$, the effectivity index is always above $1$: this indicates that the error estimate tends to be certified. Furthermore, after iteration $20$ we do not observe any further improvement, so that is seems not useful to continue enriching the preconditioner.

Let us finally note that the use of the preconditioner yields significant computational costs. Indeed, we have to store operator factorizations (in our current implementation of the method), and the computation of the interpolation of the inverse operator requires additional problems to solve (see \cite{zahm2015interpolation}). For the present application, even if the effectivity index of the error estimate is improved, the benefits of using the preconditioner remains questionable.

\begin{figure}[ht!]
   \centering
   \subfigure[Maximum value of the error estimate $\Delta(\xi)$ with respect to the number of operator factorizations.]{\centering
     \includegraphics[width=.45\textwidth]{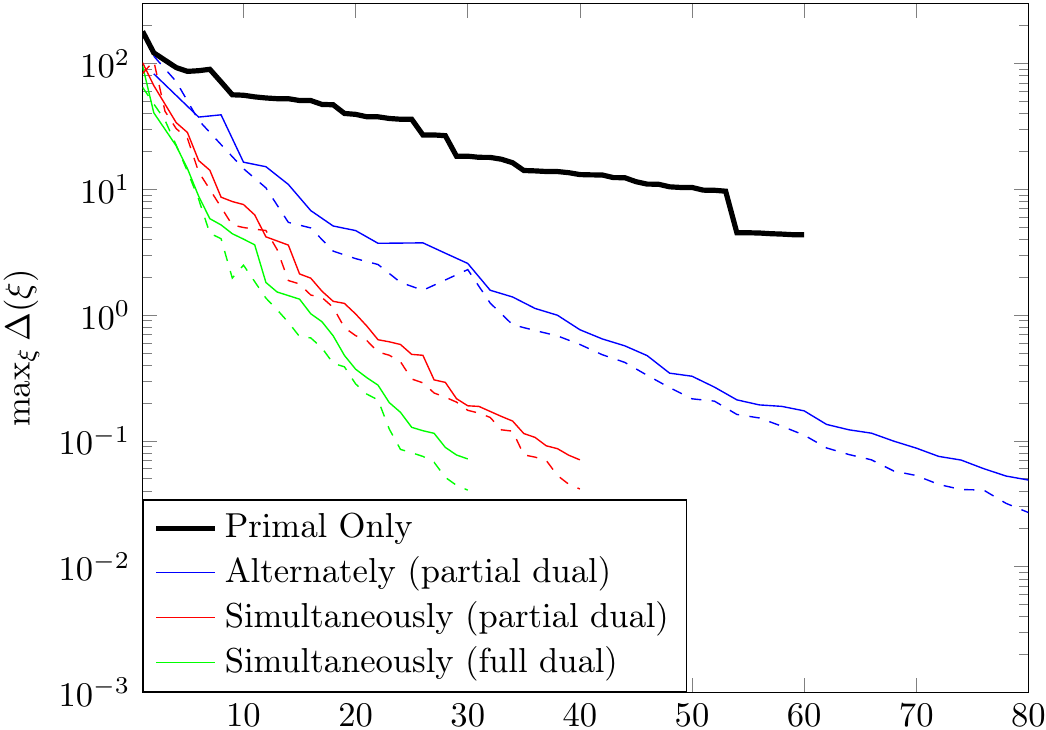}
     \label{fig:OPUS_RB_OFFLINE}
   }~~
   \subfigure[Maximum value of the error estimate $\Delta(\xi)$ with respect to the complexity for solving one reduced system.]{
     \centering
     \includegraphics[width=.45\textwidth]{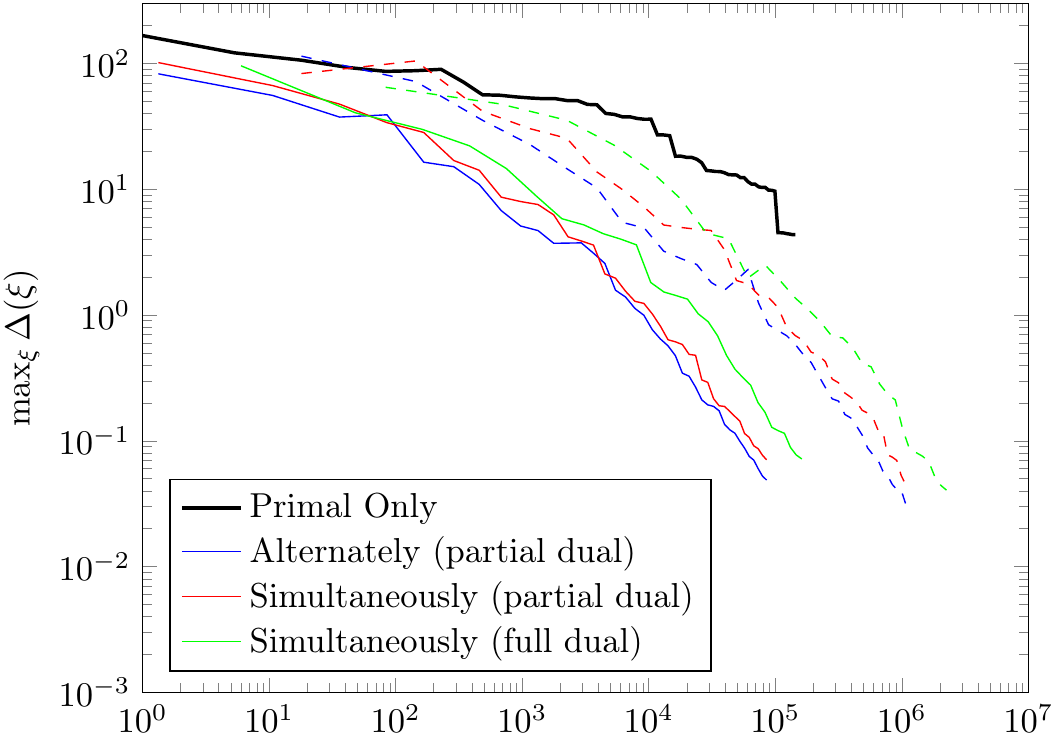}
     \label{fig:OPUS_RB_ONLINE}
   }
   \caption{Application 2 with preconditioner: error estimate $\sup_{\xi}\Delta(\xi)$ with respect to the {offline} complexity (Figure \ref{fig:OPUS_RB_OFFLINE}) and the {online} complexity (Figure \ref{fig:OPUS_RB_ONLINE}). The continuous lines correspond to the {primal-dual} method, and the dashed lines correspond to the {saddle point} method. The {primal only} curves serve as references.}
  \label{fig:OPUS_RB}
\end{figure}

\begin{figure}[ht!]
   \centering
   \subfigure[Maximum value of the error estimate $\Delta(\xi)$ with respect to the number of operator factorization.]{\centering
     \includegraphics[width=.45\textwidth]{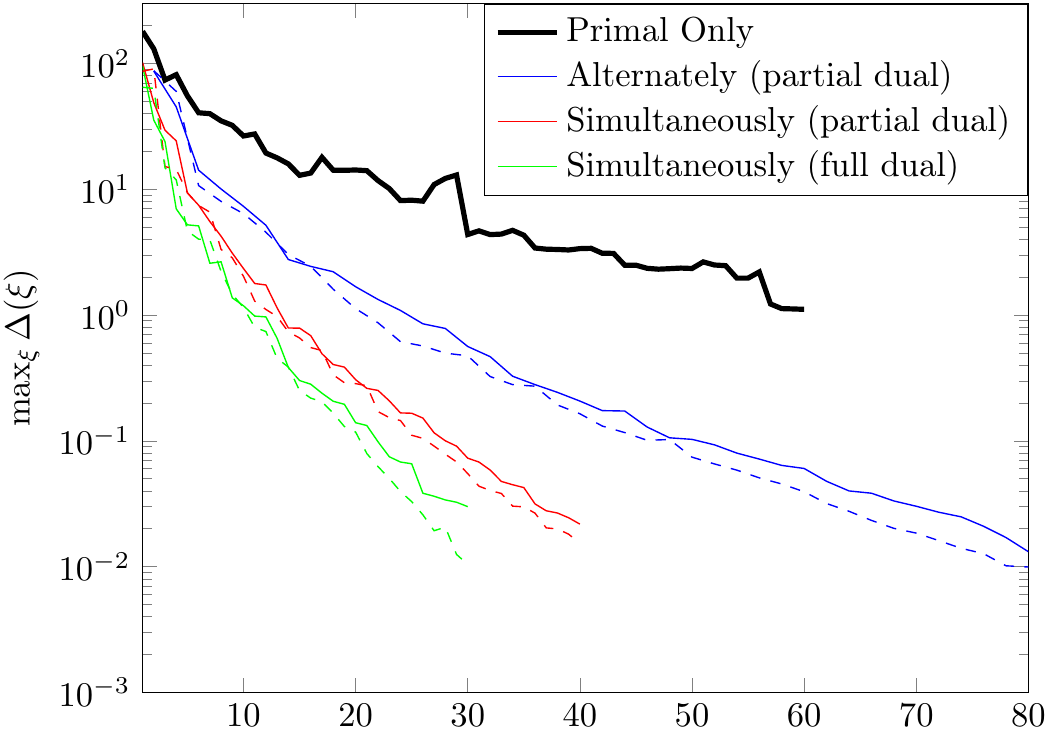}
     \label{fig:OPUS_RB_OFFLINE_withOUTPrecond}
   }~~
   \subfigure[Maximum value of the error estimate $\Delta(\xi)$ with respect to the complexity for solving one reduced system.]{
     \centering
     \includegraphics[width=.45\textwidth]{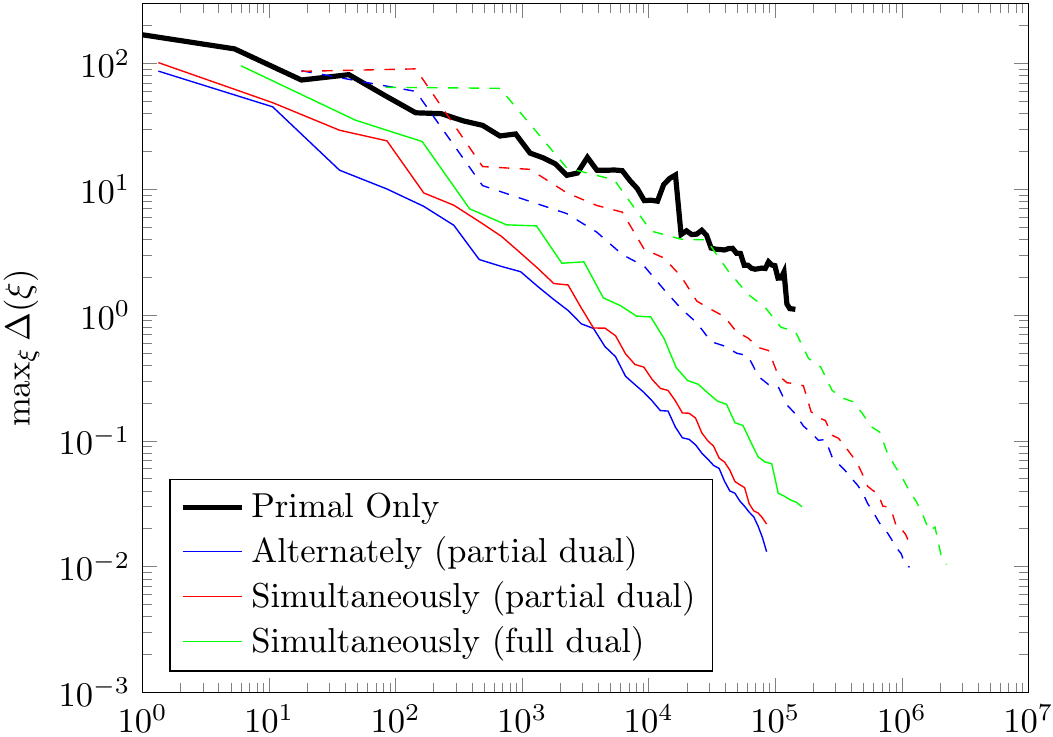}
     \label{fig:OPUS_RB_ONLINE_withOUTPrecond}
   }
   \caption{Application 2 without preconditioner: error estimate $\sup_{\xi}\Delta(\xi)$ with respect to the {offline} complexity \ref{fig:OPUS_RB_OFFLINE_withOUTPrecond} and the {online} complexity \ref{fig:OPUS_RB_ONLINE_withOUTPrecond}. The continuous lines correspond to the {primal-dual} method, and the dashed lines correspond to the {saddle point} method. The {primal only} curves serve as references.}
  \label{fig:OPUS_RB_withOUTPrecond}
\end{figure}

\begin{figure}[ht!]
   \centering
   \subfigure[Without preconditioner.]{\centering
     \includegraphics[width=.45\textwidth]{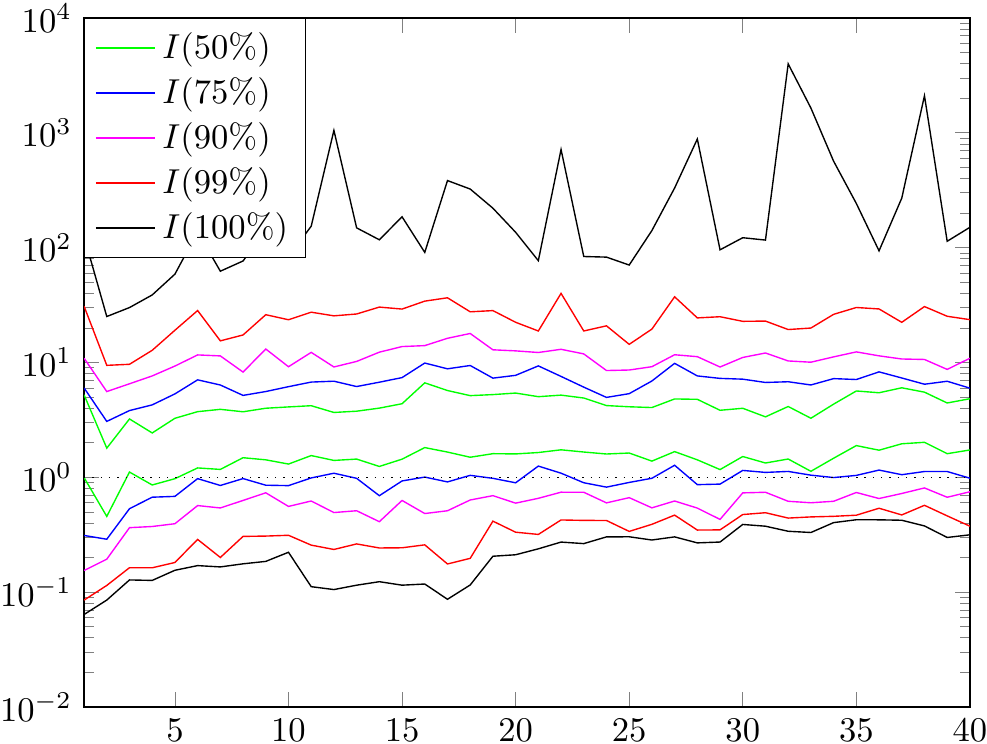}
     \label{fig:RB_OPUS_ETA_saddlePoint_WithOUTPrecond}
   }~~
   \subfigure[With preconditioner.]{
     \centering
     \includegraphics[width=.45\textwidth]{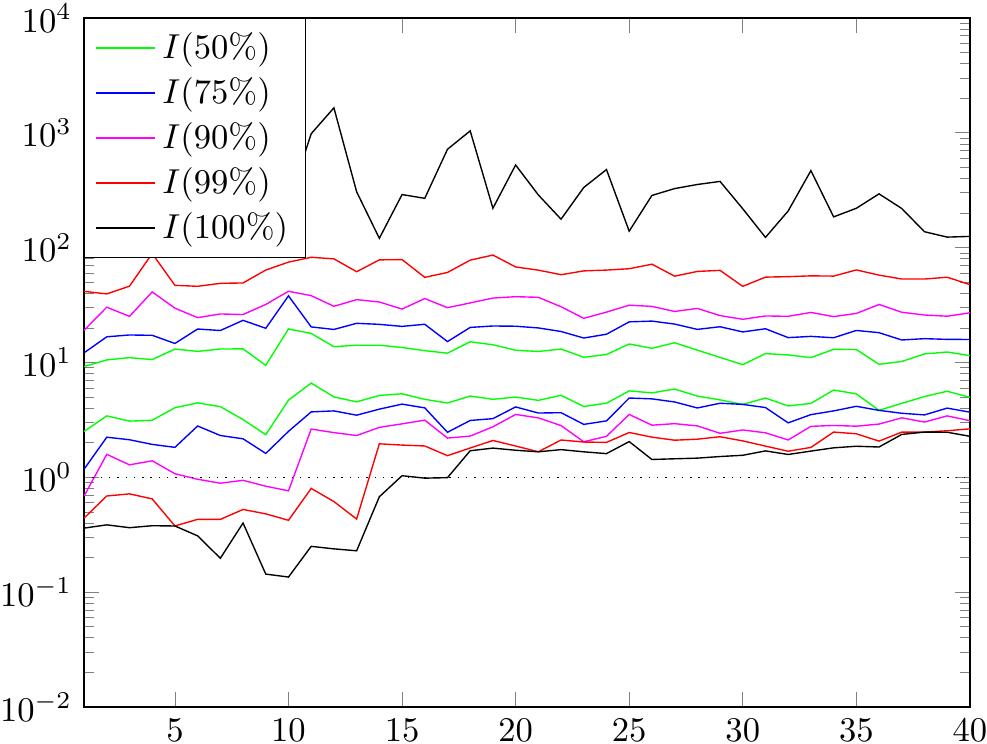}
     \label{fig:RB_OPUS_ETA_saddlePoint_WithPrecond}
   }
   \caption{Application 2: evolution with respect to the greedy iterations  of the confidence interval $I(p)$ for the effectivity index $\eta(\xi)=\Delta(\xi)/\| s(\xi) - \widetilde s(\xi)\|_Z$ for the saddle point method.}
  \label{fig:RB_OPUS_ETA}
\end{figure}

\section{Conclusion}

We have proposed and analyzed projection based methods for the estimation of vector-valued variables of interest in the context of parameter-dependent equations. This includes a generalization of the classical primal-dual method to the case of vector-valued variables of interest, and also a Petrov-Galerkin method based on a saddle point problem. Numerical results showed that the saddle point method always improves the quality of the approximation compared to the primal-dual method using the same reduced spaces. We have also derived computable error estimates and greedy algorithms for the goal-oriented construction of the reduced spaces. The performances of these approaches have been compared on numerical examples, with an analysis of both the offline complexity (construction of the reduced spaces) and the online complexity (evaluation of the reduced order model and estimation of the variable of interest for one instance of the parameter). This complexity analysis revealed that the saddle point method is preferable to the primal-dual method regarding the offline costs. However, in the situation where the reduction of the online costs matter more than the reduction of offline costs, then the primal-dual method seems to be a better option (at least for the considered applications).
{
For the considered applications, the use of preconditioners allows the construction of better reduced test spaces and also better error estimates.
Even if the additional computational costs for building the preconditioner is significant, this has demonstrated the importance of having a suitable test space and good residual based error estimates.
}

The proposed error estimates, which involve the use of Cauchy-Schwarz inequalities, are clearly not optimal. {Extending probabilistic error bounds proposed in \cite{Janon15} to the case of vector-valued variables could improve these error estimates.}

\end{document}